\title{The Donaldson-Thomas theory of $K3\times E$ via the topological vertex. }
\author{Jim Bryan}
\date{\today}
\address{
Department of Mathematics\\
University of British Columbia \\
Room 121, 1984 Mathematics Road  \\
Vancouver, B.C., Canada V6T 1Z2  
}
\definecolor{tealgreen}{HTML}{1B9E77}
\definecolor{orange}{HTML}{D95F02}
\definecolor{purple}{HTML}{7570B3}
\definecolor{pink}{HTML}{E7298A}
\definecolor{grassgreen}{HTML}{66A61E}
\definecolor{goldyellow}{HTML}{E6AB02}
\definecolor{brown}{HTML}{A6761D}
\definecolor{devilgray}{HTML}{666666}
\newtheorem{theorem}{Theorem}[section]
\newtheorem{proposition}[theorem]{Proposition}
\newtheorem{lemma}[theorem]{Lemma}
\newtheorem{conjecture}[theorem]{Conjecture}
\newtheorem{convention}[theorem]{Convention}
\newtheorem{question}[theorem]{Question}
\newtheorem{definition}[theorem]{Definition}
\newcommand{\CC} {{\mathbb C}}          
\newcommand{\ZZ} {{\mathbb Z}}		
\newcommand{\PP}{\mathbb{P}}
\renewcommand{\O}{\mathcal{O}}
\newcommand{\Hom}{\operatorname{Hom}}
\newcommand{\Sym}{\operatorname{Sym}}
\newcommand{\qtilde}{\tilde{q}\,}
\newcommand{\Hilb}{\operatorname{Hilb}}
\newcommand{\DT}{\operatorname{\mathsf{DT}}}
\newcommand{\DThat}{\widehat{\DT}}
\newcommand{\Vsf}{\mathsf{V}}
\newcommand{\Xhat}{\widehat{X}}
\newcommand{\fix}{\mathsf{fixed}}
\renewcommand{\Vert}{\mathsf{vert}}
\newcommand{\Var}{\operatorname{Var}}
\newcommand{\Spec}{\operatorname{Spec}}
\newcommand{\diag}{\mathsf{diag}}
\newcommand{\red}{\mathrm{red}}
\begin{document}

\maketitle 
\begin{abstract}
Oberdieck and Pandharipande conjectured \cite{Oberdieck-Pandharipande}
that the partition function of the Gromov-Witten/Donaldson-Thomas
invariants of $S\times E$, the product of a $K3$ surface and an
elliptic curve, is given by minus the reciprocal of the Igusa cusp
form of weight 10. For a fixed primitive curve class in $S$ of square
$2h-2$, their conjecture predicts that the corresponding partition
functions are given by meromorphic Jacobi forms of weight $-10$ and
index $h-1$. We calculate the Donaldson-Thomas partition function for
primitive classes of square -2 and of square 0, proving strong
evidence for their conjecture.

Our computation uses reduced Donaldson-Thomas invariants which are
defined as the (Behrend function weighted) Euler characteristics of the
quotient of the Hilbert scheme of curves in $S\times E$ by the action
of $E$. Our technique is a mixture of motivic and toric methods
(developed with Kool in \cite{Bryan-Kool}) which allows us to express
the partition functions in terms of the topological vertex and
subsequently in terms of Jacobi forms. We compute both versions of
the invariants: unweighted and Behrend function weighted Euler
characteristics. Our Behrend function weighted computation requires us
to assume Conjecture 18 in \cite{Bryan-Kool}.
\end{abstract}

\section{Overview}

Let $X=S\times E$ where $S$ is a $K3$ surface and $E$ is an elliptic
curve. In \cite{Oberdieck-Pandharipande}, Oberdieck and Pandharipande
conjectured that the partition function for the curve counting
invariants of $X$ is given by $-1/\chi _{10}$, minus the reciprocal of
the Igusa cusp form of weight 10\footnote{Since this paper was
originally written in 2015, Oberdieck and Pixton have recently given a
complete proof of this conjecture using methods (different from ours)
from both Donaldson-Thomas theory and Gromov-Witten theory
\cite{Oberdieck-Pixton}.}. The relevant curve counting invariants
include modified versions of Gromov-Witten invariants and stable pairs
invariants.  In this paper, we define modified Donaldson-Thomas
invariants of $X$. Our definition is given by taking the Euler
characteristic of the quotient of the Hilbert scheme of curves on $X$
by the action of the elliptic curve (we consider both the Behrend
function weighted and unweighted Euler characteristics). Our
invariants are expected to be equal to the invariants defined via
stable pairs in \cite{Oberdieck-Pandharipande}.

We employ an approach to computing these invariants which uses a
mixture of motivic and toric methods (technology developed with
M. Kool in \cite{Bryan-Kool}). We show that these methods yield
complete computations for the partition functions of $X$ in the case
where $S$ is $K3$ surface with a primitive curve class of square $-2$
or of square $0$ (assuming Conj. 18 \cite{Bryan-Kool} in the Behrend
function weighted case). The resulting partition functions are given
by the Jacobi forms $ F^{-2}\Delta ^{-1}$ and $-24\,\wp\,\Delta^{-1} $
respectively where $F$ is a Jacobi theta function, $\Delta $ is the
discriminant modular form, and $\wp $ is the Weierstrass $\wp$
function. This agrees with the prediction from by the
Oberdieck-Pandharipande conjecture thus proving their conjecture for
primitive curve classes in the $K3$ of square $-2$ or $0$.

Our general computational strategy is the following. Donaldson-Thomas
invariants are given by weighted Euler characteristics of Hilbert
schemes. We stratify the Hilbert scheme using the geometric support of
the curves and we compute Euler characteristics of strata
separately. Many of the strata acquire actions of $E$ or $\CC ^{*}$
(that were not present globally) and we restrict to the fixed point
loci. We are able to further stratify the fixed point loci and those
strata sometimes acquire further actions. Iterating this strategy, we
reduce the computation to subschemes which are formally locally given
by monomial ideals. These are counted using the topological
vertex. New identities for the topological vertex lead to closed
formulas. To incorporate the
Behrend function into this strategy, we must assume the
conjecture formulated in \cite[Conj.~18]{Bryan-Kool}. This is a
general conjecture regarding the behavior of the Behrend function at
subschemes given locally by monomial ideals.

\smallskip
\smallskip

\emph{Acknowledgements.} I'd like to thank George Oberdieck, Rahul
Pandharipande, and Yin Qizheng for invaluable discussions. I've also
benefited with discussions with Tom Coates, Sheldon Katz, Martijn
Kool, Davesh Maulik, Tony Pantev, Balazs Szendroi, Andras Szenes, and
Richard Thomas. The computational technique employed in this paper was
developed in collaboration with Martijn Kool whom I owe a debt of
gratitude. I would also like to thank the Institute for Mathematical
Research (FIM) at ETH for hosting my visit to Z\"urich, and for
Matematisk Institutt, UiO and Jan Christophersen for organizing the
2017 Abel Symposium.

\section{Definitions and conjectures}

Let $X $ be an arbitrary non-singular Calabi-Yau threefold over $\CC
$. One can define Donaldson-Thomas curve counting invariants by taking
weighted Euler characteristics of the Hilbert scheme of curves in
$X$. Let
\[
\Hilb ^{\beta ,n} (X) = \left\{Z\subset X:\,\,[Z]=\beta \in H_{2}
(X),\,\,n=\chi (\O _{Z}) \right\}
\]
be the Hilbert scheme of proper subschemes of $X$ with fixed homology
class and holomorphic Euler characteristic.

The Behrend function is a integer-valued constructible function
associated to any scheme over $\CC $. One can
define the Donaldson-Thomas invariants of $X$ by
\[
DT_{\beta ,n} (X)=\sum _{k\in \ZZ } k\cdot e\left(\nu ^{-1} (k)
\right)
\]
where
\[
\nu :\Hilb ^{\beta ,n} (X)\to \ZZ 
\]
is the Behrend function  \cite{Behrend-micro}.

It will be notationally convenient to treat an Euler characteristic
weighted by a constructible function as a Lebesgue integral, where the
measurable sets are constructible sets, the measurable functions are
constructible functions, and the measure of a set is given by its
Euler characteristic. In this language, one writes
\[
DT_{\beta ,n} (X) = \int _{\Hilb ^{\beta ,n} (X)} \nu \,de.
\]
For proper $X$, $DT_{\beta ,n} (X) $ as defined above is invariant
under deformations of $X$.

We now consider
\[
X=S\times E
\]
where $S$ is a non-singular $K3$ surface with a primitive curve class
$\beta $ of square
\[
\beta ^{2} = 2h-2.
\]
We call $h$ the \emph{genus} of the $K3$ surface. Let 
\[
\beta+dE\in H_{2} (X)
\]
denote the class $i_{S*} (\beta )+i_{E*} (d[E])$ where $i_{S}:S\to X$
and $i_{E}:E\to X$ are the inclusions obtained from choosing points
$s\in S$ and $e\in E$.

The Donaldson-Thomas invariants $DT_{\beta+dE,n} (X)$ are all
zero. This can be seen in two different ways:
\begin{enumerate}
\item The action of $E$ on $\Hilb ^{\beta+dE,n} (X)$ is fixed point
free, consequently its (Behrend function\footnote{The value of the
Behrend function at a closed point of a scheme only depends on the
local ring of that point, therefore the Behrend function of a scheme
is invariant under any group action.} weighted) Euler characteristic is
zero.
\item There exists deformations of $S$ which make $\beta $
non-algebraic. Under this deformation, the Hilbert scheme $\Hilb
^{\beta+dE,n} (X)$ becomes empty. Since $DT_{\beta+dE,n} (X)$ is
deformation invariant it must be zero.
\end{enumerate}

Remarkably, the above two issues can be solved simultaneously by
taking the weighted Euler characteristic of the quotient of the
Hilbert scheme.
\begin{definition}\label{defn: reduced DT invariants}
The \textbf{reduced} Donaldson-Thomas invariants of $X$ are defined by
\[
\DT _{\beta+dE,n} (X) = \int _{\Hilb ^{\beta+dE,n} (X)/E} \nu \,de
\]
where $\nu :\Hilb ^{\beta+dE,n} (X)/E\to \ZZ $ is the Behrend function
of the quotient. Note that we denote the reduced invariants with the
san serif font $\DT $, while the ordinary invariants have the ordinary
font $DT$.
\end{definition}

\begin{conjecture}\label{conj: reduced DT is invariant under deforms leaving beta algebraic}
The number $\DT _{\beta+dE,n} (X)$ is invariant under deformations
of $X$ which keep the class $\beta+dE$ algebraic.
\end{conjecture}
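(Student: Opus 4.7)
The plan is to spread the Behrend-weighted Euler characteristic of $\Hilb^{\beta+dE,n}(X)/E$ out in a family and verify that it is locally constant on the locus of deformations keeping $\beta+dE$ algebraic. Fix a smooth connected base $B$ and a family $\pi:\mathcal{X}\to B$ of Calabi-Yau threefold deformations of $X$ along which $\beta+dE$ remains of type $(1,1)$. The first issue is producing the group by which we quotient in families. For deformations of $X=S\times E$ that keep $\beta+dE$ algebraic, I would argue, via the period map for hyperk\"ahler-fibered deformations of $X$, that there exists a relative abelian scheme $\mathcal{E}\to B$ acting on $\mathcal{X}/B$ and restricting fiberwise to the original $E$-action on $X$; algebraicity of the elliptic fiber class $E$ is what forces this relative group structure to persist.

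Having secured the relative action, form the relative Hilbert scheme and its quotient
\[
\mathcal{M}=\Hilb^{\beta+dE,n}(\mathcal{X}/B)/\mathcal{E},
\]
which is proper over $B$ with fiber $\Hilb^{\beta+dE,n}(X_b)/E_b$ over $b\in B$. The key technical step is to equip $\mathcal{M}\to B$ with a reduced symmetric perfect relative obstruction theory whose restriction to each fiber recovers the obstruction theory implicit in Definition~\ref{defn: reduced DT invariants}. Once this is in place, Behrend's theorem identifies each $\DT_{\beta+dE,n}(X_b)$ with the degree of the fiber of a virtual cycle pulled back from a relative virtual cycle on $\mathcal{M}/B$, and the standard deformation invariance for relative virtual cycles then yields the conjecture.

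The main obstacle is exactly the construction of this reduced obstruction theory on a quotient by a nontrivial group action, since $\mathcal{M}$ is a priori only an algebraic space and not a fine moduli space. I would attack this by passing to an \'etale-local slice $\mathcal{S}\subset\Hilb^{\beta+dE,n}(\mathcal{X}/B)$ transverse to the free $\mathcal{E}$-action, building the reduced theory there by twice modifying the standard Hilbert-scheme obstruction theory (removing the trivial factor from infinitesimal $\mathcal{E}$-translations and the factor coming from the holomorphic symplectic form on the $K3$-type fibers), and checking that the resulting theory descends to $\mathcal{M}$ and agrees fiberwise with the theory determining the Behrend weighting. An alternative, and perhaps cleaner, route would be to establish a Joyce-Song/Bridgeland style Hall-algebra reduced DT/PT correspondence compatible with the $\mathcal{E}$-quotient and import deformation invariance from Oberdieck's reduced stable pair theory on $K3\times E$; this avoids the quotient virtual-class construction but shifts the difficulty into proving a compatible reduced wall-crossing identity.
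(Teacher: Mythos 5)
You should be aware that the paper does not prove this statement: it is stated there as a conjecture, accompanied only by a proof sketch in which the $(-1)$-shifted symplectic structure on $\Hilb^{\beta+dE,n}(X)$ (via PTVV) is cut down by a $(-1)$-shifted symplectic quotient by the $E$-action, producing a symmetric obstruction theory on $\Hilb^{\beta+dE,n}(X)/E$ whose virtual degree is, by Behrend's theorem, exactly the weighted Euler characteristic defining $\DT_{\beta+dE,n}(X)$, and whose virtual class is then asserted to be invariant under deformations keeping $\beta$ algebraic. Your proposal is essentially a relative-over-a-base version of this same idea, rephrased in classical obstruction-theory language, and like the paper's sketch it is not a proof: what you call ``the key technical step'' is precisely the content that makes this a conjecture rather than a theorem. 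Concretely, (a) removing the obstruction factor coming from the holomorphic symplectic form by itself destroys the symmetry of the obstruction theory, and Behrend's theorem (virtual degree equals $\int \nu\,de$ for the \emph{intrinsic} Behrend function of the space) applies only to symmetric theories; so you must actually prove that the removed obstruction line is Serre dual to the removed $E$-translation tangent direction, that the twice-reduced theory on an \'etale slice is again symmetric and induces the Behrend function of the quotient used in Definition~\ref{defn: reduced DT invariants}, and that it descends from the slice to the quotient space $\mathcal{M}$; (b) you must also verify that the relative reduced theory is perfect over $B$ and that $\mathcal{M}\to B$ is proper, which is what deformation invariance of relative virtual classes actually requires. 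None of these steps is carried out, and each is a substantive construction rather than a routine check, so the proposal records a plausible strategy but leaves the conjecture open, just as the paper does.

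Two further points. The existence of the relative abelian scheme $\mathcal{E}$ does not need the period-map argument you invoke, and it has nothing to do with keeping $\beta+dE$ algebraic: since $H^{0}(S,T_{S})=0$ and $H^{1}(S,\O_{S})=0$, one has $H^{1}(T_{X})\cong H^{1}(T_{S})\oplus H^{1}(T_{E})$, so every small deformation of $X=S\times E$ is again a product $S'\times E'$ and carries the translation action of its second factor. The algebraicity hypothesis enters only through the reduction of the obstruction theory (the removed obstruction is exactly the obstruction to $\beta$ remaining algebraic, dual to the $E$-translations), so attributing the persistence of the group action to it indicates a misidentification of where the hypothesis is used. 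Finally, your alternative Hall-algebra route does not close the gap either: the equality of the quotient-Behrend invariants used here with the reduced stable pair invariants of Oberdieck--Pandharipande is itself only expected (the paper states it as an expectation, not a theorem), so that route trades one open statement for another.
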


\emph{Proof sketch:} The Hilbert scheme $\Hilb ^{\beta+dE,n} (X)$
admits a $(-1)$-shifted symplectic structure coming from viewing it as
a moduli space of rank 1 sheaves on $X$ with trivialized determinant
\cite{PTVV}.  Taking the $(-1)$-symplectic quotient of the Hilbert
scheme by the action of $E$ yields a $(-1)$-symplectic space whose
underlying space is $\Hilb ^{\beta+dE,n} (X)/E$ (the moment map
affects the derived structure, but not the classical space). As with
any $(-1)$-shifted symplectic structure, this shifted symplectic
structure gives rise to a symmetric obstruction theory whose
associated virtual class has degree equal to the Behrend function
weighted Euler characteristic of underlying scheme. The effect of
taking the zeros of the moment map in the symplectic quotient
construction is to remove from the obstruction space those
obstructions to deforming the class $\beta $ to a non-algebraic
class. Note that these obstructions are dual to the deformations of a
subscheme given by the action of $E$. The resulting virtual class on
$\Hilb ^{\beta+dE,n} (X)/E$ should be invariant under deformations
preserving the algebraicity of $\beta $.

The analogous conjecture for reduced stable pairs invariants was
proved in \cite{ Oberdieck-reduced-stable-pair}.

Up to deformation, a curve class on a $K3$ surface is determined by
its square and divisibility, so by our assumption that $\beta $ is
primitive, it only depends on $h$ up to deformation. We thus
streamline the notation by writing:
\[
\DT _{h,d,n} (X):=\DT _{\beta+dE,n} (X)
\]
and we also write
\[
\Hilb ^{h,d,n} (X):=\Hilb ^{\beta+dE,n} (X).
\]

We also consider the related (but not \emph{a priori} deformation
invariant) quantity given by unweighted Euler characteristics.
\[
\DThat _{h,d,n} (X) = \int _{\Hilb ^{h ,d,n} (X)/E}1\,de.
\]

We define partition functions as follows
\begin{align*}
\DT (X) &= \,\,\sum _{h=0}^{\infty } \,\,\,\DT _{h} (X) \,\qtilde ^{h-1}\\
    	&= \sum _{\begin{smallmatrix} h,d\geq 0\\n\in \ZZ  \end{smallmatrix}} \DT _{h,d,n} (X) \,\qtilde ^{h-1}q^{d-1} (-p)^{n}\\
\DThat (X) &=\,\, \sum _{h=0}^{\infty }\,\,\, \DThat _{h} (X) \,\qtilde ^{h-1}\\
    	&= \sum _{\begin{smallmatrix} h,d\geq 0\\n\in \ZZ  \end{smallmatrix}} \DThat _{h,d,n} (X) \,\qtilde ^{h-1}q^{d-1} p^{n}
\end{align*}

We remark that our convention for the $\qtilde $ and $q$ variables is
the opposite from Oberdieck and Pandharipande's, however there is a
conjectural symmetry $\qtilde \leftrightarrow q$ and so this
difference should not be seen in the formulas. To be precise, the
Donaldson-Thomas version of Oberdieck and Pandharipande's conjecture
is the following.
\begin{conjecture}\label{conj: DT(X)=1/chi10}
Let $\chi _{10}$ be the Igusa cusp form of weight 10, then
\[
\DT (X) = -\frac{1}{ \chi _{10}}\,\,.
\]
Explicitly, we can write
\[
\chi _{10} \left(p,q,\qtilde  \right) = pq\qtilde \left(1-p^{-1} \right)^{2}\prod _{ n\in \ZZ}\prod _{ (d,h)> (0,0)} \left(1-p^{n}q^{d}\qtilde ^{h} \right)^{c\left(4dh-n^{2} \right)}
\]
where the integers $c (k)$ are given as the coefficients of $Z$, the
elliptic genus of the $K3$ surface:
\[
Z (p,q) = -24\wp F^{2} = \sum _{n\in \ZZ }\sum _{d\geq 0} c (4d-n^{2})p^{n}q^{d}.
\]
Here $F$ is a Jacobi theta function and $\wp $ is the Weierstrass $\wp
$-function, namely
\[
-F^{-2} = \frac{p}{(1-p)^{2}}\prod _{m=1}^{\infty }\frac{(1-q^{m})^{4}}{\left(1-p q^{m} \right)^{2}\left(1-p^{-1} q^{m} \right)^{2}}
\]
and 
\[
\wp =\frac{1}{12}+\frac{p}{(1-p)^{2}}+\sum _{d=1}^{\infty
}\left(\sum _{k|d}k (p^{k}+p^{-k}-2) \right)q^{d}.
\]
\end{conjecture}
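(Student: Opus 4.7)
The plan is to combine the topological-vertex computation of this paper --- which yields closed formulas for $\DT_h(X)$ one genus at a time --- with modularity principles that force the total partition function $\DT(X)(p,q,\qtilde)$ to be a meromorphic Siegel modular form of weight $-10$. Since the space of Siegel cusp forms of weight $10$ for $\mathrm{Sp}(4,\ZZ)$ is spanned by $\chi_{10}$, matching the normalization in a single base case would then pin down the full identity $\DT(X) = -1/\chi_{10}$.

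First I would establish Jacobi modularity at fixed genus: for each $h$, the two-variable series $\DT_h(X)(p,q)$ should be a meromorphic Jacobi form of weight $-10$ and index $h-1$ with a prescribed double pole at $p=1$ coming from constant-map contributions. I would prove this by degenerating $E$ to a nodal curve so that $X$ degenerates to $S\times \PP^1$ glued to itself, reducing $\DT(X)$ to a relative DT computation on a $K3$-fibered threefold, and then invoking Noether--Lefschetz-style modularity results in the spirit of Maulik--Pandharipande--Thomas to deduce quasi-Jacobi behavior in $q$. Using the toric/motivic machinery of the paper I would then compute $\DT_h(X)$ for additional small values of $h$ via the topological vertex; together with the Jacobi structure and the prescribed principal part at $p=1$, each $\DT_h(X)$ is determined by finitely many Fourier coefficients, and can be matched against the corresponding graded piece of $-1/\chi_{10}$.

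The third step is to upgrade this genus-by-genus Jacobi modularity to full Siegel modularity, so as to obtain a uniform identification rather than an infinite family of separate matchings. The key inputs would be: (a) the conjectural $q \leftrightarrow \qtilde$ symmetry, arising from a Fourier--Mukai duality on $X$ that exchanges the $S$ and $E$ directions, and (b) a Borcherds-lift interpretation of $\DT(X)$: the exponent $c(4dh-n^2)$ appearing in the product expansion of $\chi_{10}$ is exactly the Fourier coefficient of the $K3$ elliptic genus $Z = -24\wp F^2$, so one can hope to realize $\DT(X)$ as a multiplicative lift of $Z$ directly from the vertex contributions over formal neighborhoods of $E$-orbit strata.

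The \textbf{main obstacle} is exactly this last step: the vertex techniques of this paper naturally compute one $h$ at a time and do not a priori exhibit any symmetry in $\qtilde$ or the full $\mathrm{Sp}(4,\ZZ)$ transformation law. Giving either a direct geometric proof of the $q\leftrightarrow \qtilde$ exchange, or a rigorous Borcherds-lift realization of the reduced DT vertex calculation, is where the proof of the full conjecture must genuinely go beyond the techniques of the present paper; the verifications at $h=0$ and $h=1$ carried out here should then be reinterpretable as pinning down the weight and leading Fourier--Jacobi coefficient of the Siegel form obtained by this lift.
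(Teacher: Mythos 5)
The statement you are addressing is a \emph{conjecture} in the paper, not a theorem: the paper nowhere proves $\DT(X)=-1/\chi_{10}$ in general. What it actually proves (Theorem~\ref{thm: main theorem: DTh for h=0 and h=1}) is the two specializations $\DT_{0}(X)=1/(F^{2}\Delta)$ and $\DT_{1}(X)=-24\wp/\Delta$, by stratifying the Hilbert scheme according to curve support, exploiting $E$- and $\CC^{*}$-actions on strata, reducing to locally monomial subschemes counted by the topological vertex, and inserting the Behrend function via the Joyce--Song/Bridgeland Hall-algebra formalism. So there is no proof in the paper to compare yours against; your proposal is an attempt at the open problem, and it should be judged as such.

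As a proof it has genuine gaps, the central one of which you concede yourself. Concretely: (i) the passage from genus-by-genus Jacobi modularity to the full $\mathrm{Sp}(4,\ZZ)$ transformation law --- equivalently the $q\leftrightarrow\qtilde$ exchange --- is precisely the hard open content of the Oberdieck--Pandharipande conjecture; neither the Fourier--Mukai heuristic nor the Borcherds-lift interpretation of the exponents $c(4dh-n^{2})$ has been made rigorous for these reduced DT invariants, so listing them as ``key inputs'' assumes what must be proved. (ii) Your first step already leans on unproved foundations: degenerating $E$ to a nodal curve and transporting the computation to a $K3$-fibered threefold requires deformation/degeneration invariance of the reduced invariants, which is itself only Conjecture~\ref{conj: reduced DT is invariant under deforms leaving beta algebraic} in the paper, supported by a proof sketch via $(-1)$-shifted symplectic reduction. (iii) The claim that each $\DT_{h}(X)$ is pinned down by finitely many Fourier coefficients needs control of the entire polar part of a meromorphic Jacobi form of weight $-10$ and index $h-1$; without proving that the only singularity is the prescribed double pole at $p=1$ (and not at other torsion points), the relevant space of forms is not finite dimensional. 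Finally, the suggestion to ``compute additional small $h$ via the vertex'' runs into the obstruction the paper itself flags for $h\geq 2$: the curve singularities are no longer formally locally toric, so the vertex method does not directly apply. In short, your program is a reasonable description of how one might hope to attack the full conjecture, but each of its three stages contains an unproved assertion, and none of it reproduces the argument the paper actually carries out for the cases it does settle.
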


Expanding $-\chi _{10}^{-1}$ as a series in $\qtilde $, one obtains
predictions for each $\DT _{h} (X)$ in terms of Jacobi forms of weight
-10 and index $h-1$ (see \cite[page~10]{Oberdieck-Pandharipande}). The
main result of this paper is the following theorem.

\begin{theorem}\label{thm: main theorem: DTh for h=0 and h=1}
The genus 0 and genus 1 partition functions for the unweighted
Donaldson-Thomas invariants are given by 
\begin{align*}
\DThat _{0} (X)
&= \frac{pq^{-1}}{(1-p)^{2}} \prod _{m=1}^{\infty } (1-q^{m})^{-20} (1-pq^{m})^{-2} (1-p^{-1}q^{m})^{-2}\\
\DThat _{1} (X) &= 24 q^{-1}\prod _{m=1}^{\infty } (1-q^{m})^{-24}\left(\frac{1}{12} + \frac{p}{(1-p)^{2}}+\sum _{d=1}^{\infty }\sum _{k|d}k\left(p^{k}+2 +p^{-k}\right) q^{d} \right)
\end{align*}
Moreover, assuming Conjecture \ref{conj: Behrend fnc conj} 
(i.e. \cite[Conj.~18]{Bryan-Kool}), the partition functions for the
Behrend function weighted Donaldson-Thomas invariants are given by
(note the sign differences with the above formulae, an overall sign on
each, and on the 2 within the sum)
\begin{align*}
\DT _{0} (X)
&= \frac{-pq^{-1}}{(1-p)^{2}} \prod _{m=1}^{\infty } (1-q^{m})^{-20} (1-pq^{m})^{-2} (1-p^{-1}q^{m})^{-2}\\
\DT _{1} (X) &= -24 q^{-1}\prod _{m=1}^{\infty } (1-q^{m})^{-24}\left(\frac{1}{12} + \frac{p}{(1-p)^{2}}+\sum _{d=1}^{\infty }\sum _{k|d}k\left(p^{k}-2 +p^{-k}\right) q^{d} \right)
\end{align*}

The above two formulas verify the Oberdieck-Pandharipande conjecture
for $K3$ surfaces with a primitive curve class of square $-2$ or
$0$. Namely, the series $\DT _{h} (X)$ for $h=0$ and $h=1$ are given
by the following Jacobi forms
\begin{align*}
\DT _{0} (X) &= \frac{1}{F^{2}\Delta },\\
\DT _{1} (X) &= -24\frac{\wp }{\Delta }.
\end{align*}

\end{theorem}

\section{Preliminaries and notation.}

Our aim is to compute $\DT _{h} (X) $ for $h=0$ and $h=1$. We begin by
computing $\DThat _{h} (X)$ and then discuss how to modify the
argument to include the Behrend function in section~\ref{sec: putting
in the Behrend function}.

Euler characteristic is motivic: it defines a homomorphism from $K_{0}
(\Var_{\CC })$, the Grothendieck group of varieties over $\CC $, to the
integers. We define 
\[
\Hilb ^{h,\bullet ,\bullet } (X)/E = \sum _{n,d} \left[\Hilb ^{h,d,n} (X)/E \right] p^{n} q^{d}
\]
which we regard as an element in $K_{0} (\Var _{\CC }) ((p))[[q]]$. We
will use this convention throughout:
\begin{convention}\label{bullet convention}
When an index is replaced by a bullet, we will sum over the index,
multiplying by the appropriate variable.
\end{convention}

We see that with our notation
\[
\DThat _{h} (X) = q^{-1} e\left(\Hilb ^{h,\bullet ,\bullet } (X)/E
\right).
\]

\begin{definition}
Let $p_{S}$ and $p_{E}$ be the projections of $X=S\times E$ onto each
factor and let $C\subset X$ be an irreducible curve. We say that $C$
is \textbf{vertical } if $p_{E}:C\to E$ is degree zero and we say $C$ is
\textbf{horizontal} if $p_{S}:C\to S$ is degree zero. If both maps are
of non-zero degree, we say $C$ is \textbf{diagonal}. See Figure~\ref{fig: diag, vert, and horiz curves}.
\end{definition}

We will assume that $X=S\times E$ where $S$ is generic among $K3$
surfaces admitting a primitive class $\beta $ of square $2h-2$. In
particular, $\beta $ is an irreducible class.

Since $\beta $ is an irreducible class, any subscheme $Z$
corresponding to a point in $\Hilb ^{h,d,n} (X)$ must have a unique
component $C_{0}\subset Z$ which is either a vertical or a diagonal
curve with all other curve components of $Z$ being
horizontal. Subschemes with $C_{0}$ diagonal cannot deform to
subschemes with $C_{0} $ horizontal and so we get a decomposition of
the Hilbert scheme into disjoint components corresponding to
subschemes with vertical and diagonal components respectively:
\[
\Hilb ^{h,d,n} (X) = \Hilb_{\Vert } ^{h,d,n} (X) \sqcup \Hilb_{\diag } ^{h,d,n} (X) 
\]
Diagonal curves do not appear in the $h=0$ case, but do occur for
$h\geq 1$.

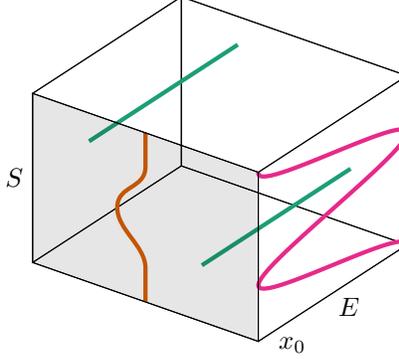
\begin{figure}
\caption{A vertical curve (orange) contained in the slice $S\times
\{x_{0} \}$ (light grey), a diagonal curve (pink), and two horizontal
curves (green).}\label{fig: diag, vert, and horiz curves}
\begin{tikzpicture}[
                    z  = {-15},
                    scale = 0.75]

\begin{scope}[yslant=-0.35,xslant=0]


\begin{scope} [canvas is yz plane at x=0]
\draw [black](0,0) rectangle (3,5);
\end{scope}
\begin{scope} [canvas is xz plane at y=0]
\draw [black](0,0) rectangle (4,5);
\end{scope}
\draw [black](0,0) rectangle (4,3);

\draw [ultra thick, tealgreen] (3,1,0)--(3,1,5) (1.0,2.5,0)-- (1.0,2.5,5);

\draw [ultra thick,orange] 
                   (2  ,0   ,5)
to [out=90,in=-90] (2  ,0.6 ,5)
to [out=90,in=-90] (1.5,1.5 ,5)
to [out=90,in=-90] (2  ,2.4 ,5)
to [out=90,in=-90] (2  ,3   ,5);

\node [left] at (0,1.5,5) {$S$};
\node [right] at (4.2,0,3) {$E$};
\node [right] at (4.2,0,5) {$x_{0}$};

\begin{scope} [canvas is yz plane at x=4]
\draw [black](0,0) rectangle (3,5);
\draw [pink, ultra thick, domain=0:3, samples=100] 
plot (\x ,{5*pow(sin(28.8*pi*\x),2)});
\end{scope}
\begin{scope} [canvas is xz plane at y=3]
\draw [black](0,0) rectangle (4,5);
\end{scope}
\draw [black](0,0,5) rectangle (4,3,5);
\draw [black,fill, opacity=0.1](0,0,5) rectangle (4,3,5);

\end{scope}
\end{tikzpicture}
\end{figure}

\section{Computing $\DThat _{h} (X)$ in the case $h=0$.}\label{sec: computing DThat for h=0}

We now consider the case where $h=0$. The $K3$ surface $S$ has a
single curve $C_{0}\cong \PP ^{1}$ in the class $\beta $. There are no
diagonal curves since such a curve would have geometric genus 0
but also admit a non-constant map to $E$.

\begin{figure}
\caption{Subschemes in $S\times E$ up to translation. Horizontal curves (pink) can have nilpotent thickenings (blue), and there can be embedded and floating points (gray). The unique vertical curve $C_{0}$ (green) lies in $S\times \{x_{0} \}$ and is generically reduced.}\label{fig: subschemes in SxE where S is genus 0}
\begin{tikzpicture}[
                    z  = {-15},
                    scale = 0.75,
embeddedpoint/.style={shade, ball color=#1}
]

\begin{scope}[yslant=-0.35,xslant=0]
  
\begin{scope} [canvas is yz plane at x=0]
\draw [black](0,0) rectangle (3,5);
\end{scope}
\begin{scope} [canvas is xz plane at y=0]
\draw [black](0,0) rectangle (4,5);
\end{scope}
\draw [black](0,0) rectangle (4,3);


\foreach \x in {2}
\foreach \y in {2.5}
{
\draw [ultra thick,pink] (\x ,\y,0)-- (\x ,\y,5);
\foreach \z in {0,0.1,...,5.1} 
    \draw [purple]
(\x ,\y ,\z ) -- ({\x +0.15*cos(24*pi*\z )},{\y+0.15*sin(24*pi*\z )},\z ) ;
}
\foreach \x in {3.2}
\foreach \y in {0.7}
{
\draw [ultra thick,pink] (\x ,\y,0)-- (\x ,\y,5);
\foreach \z in {0,0.1,...,5.1} 
    \draw [purple]
(\x ,\y ,\z ) -- ({\x +0.15*cos(36*pi*\z )},{\y+0.15*sin(36*pi*\z )},\z ) ;
}

\draw [ultra thick, pink] (2,0.5,0)--(2,0.5,5) (1.0,2.5,0)-- (1.0,2.5,5);

\draw [ultra thick,tealgreen] 
                   (2  ,0   ,5)
to [out=90,in=-90] (2  ,0.6 ,5)
to [out=90,in=-90] (1.5,1.5 ,5)
to [out=90,in=-90] (2  ,2.4 ,5)
to [out=90,in=-90] (2  ,3   ,5);


\foreach \p in {(1.6,1.2,5),(2.5,2,0), (3,1,0),(1,2.5,4.5),(2,2.5,5),(2,2.5,3)}
\shadedraw[ball color = gray] \p   circle (0.1);

\node [left] at (0,1.5,5) {$S$};
\node [right] at (4.2,0,3) {$E$};
\node [right] at (4.2,0,5) {$x_{0}$};
\node [below] at (2,0,5){$C_{0}$};

\draw [black](0,0,5) rectangle (4,3,5);

\begin{scope} [canvas is yz plane at x=4]
\draw [black](0,0) rectangle (3,5);
\end{scope}
\begin{scope} [canvas is xz plane at y=3]
\draw [black](0,0) rectangle (4,5);
\end{scope}

\end{scope}
\end{tikzpicture}

\end{figure}

We fix a base point $x_{0}\in E$. We can fix a slice for the action of
$E$ on $\Hilb ^{0,d,n} (X)$ by requiring that the unique vertical
curve lies in $S\times \{x_{0} \}$. We denote the slice with the
subscript ``$\fix $''.
\[
\Hilb ^{0,d,n} (X)/E\cong \Hilb ^{0,d,n}_{\fix } (X)\subset \Hilb ^{0,d,n} (X).
\] 
The points in $\Hilb ^{0,d,n}_{\fix } (X)$ correspond to subschemes
$Z\subset X$ given by unions of the curve $C_{0}\times \{x_{0} \}$
with horizontal curves whose support is of the form
$\{\text{points}\times E \}$, but may have nilpotent thickenings. The
subscheme $Z$ also potentially has embedded points as well as zero
dimensional components away from the curve support (see
Figure~\ref{fig: subschemes in SxE where S is genus 0}).

As a consequence of the above geometric description, we see that any
such subscheme is a disjoint union of a subscheme of $\Xhat
_{C_{0}\times E}$, the formal neighborhood of $C_{0}\times E$ in $X$,
and $X- (C_{0}\times E)$. This leads to a decomposition of the Hilbert
scheme into strata given by products of Hilbert schemes of subschemes
of $\Xhat _{C_{0}\times E}$ and subschemes of $X- (C_{0}\times
E)$. Using our bullet convention, this can be efficiently expressed as
follows.
\begin{equation}\label{eqn: Hilb/E=Hilb(X-CxE)*Hilb(Xhat)}
\Hilb ^{0,\bullet ,\bullet } (X)/E = \Hilb ^{0,\bullet ,\bullet }_{\fix } \left(\Xhat
_{C_{0}\times E} \right)\cdot  \Hilb ^{0,\bullet ,\bullet } (X-
(C_{0}\times E)) 
\end{equation}
where as before the subscript ``$\fix $'' indicates that we are
restricting to the sublocus
\[
\Hilb ^{0,d,n}_{\fix } \left(\Xhat _{C_{0}\times E} \right)\subset
\Hilb ^{0,d,n}\left(\Xhat _{C_{0}\times E} \right)\subset \Hilb
^{0,d,n} (X)
\]
parameterizing subschemes where the unique vertical curve is
$C_{0}\times \{x_{0} \}$.

Note that $d$ (the degree in the $E$ direction) and $n$ (the
holomorphic Euler characteristic) are both additive under the disjoint
union which allows us to express the decomposition as a product of
Grothendieck group valued power series as above. Taking Euler
characteristics of the above series, we find
\begin{equation}\label{eqn: DT0=e(Hilb(Xhat))*e(Hilb(U))}
q\,\DThat _{0} (X) =  e\left(\Hilb ^{0,\bullet ,\bullet }_{\fix }\left(\Xhat
_{C_{0}\times E} \right) \right) \cdot  e\left(\Hilb ^{0,\bullet ,\bullet } (X-C_{0}\times
E) \right) .
\end{equation}

Note that the action of $E$ on $X-C_{0}\times E$ induces an action on
$\Hilb ^{0,d,n} (X-C_{0}\times E)$. This ``new'' $E$ action is
possible because the ``fixed'' condition lives entirely in the $\Hilb
^{0,d,n}_{\fix } \left(\Xhat _{C_{0}\times E} \right)$ factors (which
do not have $E$ actions).

The Euler characteristic of a scheme with a free $E$ action is trivial
and so
\[
e\left(\Hilb ^{0,d,n} (X-C_{0}\times E) \right) = e\left(\Hilb ^{0,d,n} (X-C_{0}\times E)^{E} \right) .
\]
The $E$-fixed locus $\Hilb ^{0,d,n} (X-C_{0}\times E)^{E}$
parameterizes subschemes which are invariant under the $E$
action. Such subschemes are of the form $Z\times E$ where $Z\subset
S-C_{0}$ is a zero-dimensional subscheme of length $d$. Such
subschemes have $n=\chi (\O _{Z\times E})=0$ and so
\begin{align}\label{eqn: e(Hilb (U))=prod(1-q^m)^(-22)}
\nonumber e\left(\Hilb ^{0,\bullet ,\bullet } (X-C_{0}\times E)^{E} \right) &= e\left(\sum _{d=0}^{\infty }\Hilb ^{d} (S-C_{0})\,q^{d} \right)\\
&=\prod _{m=1}^{\infty }\left(1-q^{m} \right)^{-22}.
\end{align}
Here we have used G\"ottsche's formula for the Euler characteristics
of Hilbert schemes of points of surfaces; the 22 appearing in the
exponent is the Euler characteristic of the surface $S-C_{0}$.

To compute $e\left(\Hilb _{\fix }^{0,\bullet,\bullet } (\Xhat
_{C_{0}\times E}) \right)$, we begin by noting that there is a
morphism
\[
\rho _{d} : \Hilb _{\fix }^{0,d,n} (\Xhat _{C_{0}\times E}) \to \Sym^{d} (C_{0})
\]
given by the intersection (with multiplicity) of the horizontal
components of a curve with the vertical curve $C_{0}$. In other words,
a scheme whose curve support is $C_{0}\cup _{i} ( y_{i}\times E)$ with
multiplicity $a_{i}$ along $y_{i}\times E$ is mapped to $\sum
_{i}a_{i}y_{i}\in \Sym ^{d} (C_{0})$ (see Figure~\ref{fig: map
Hilb(Xhat)-->SymC0}).

\begin{figure}
\caption{The map $\rho _{d} : \Hilb _{\fix }^{0,d,n} (\Xhat _{C_{0}\times E}) \to \Sym^{d} (C_{0})$ records the location and multiplicity of the horizontal curve components.}\label{fig: map Hilb(Xhat)-->SymC0}
\begin{tikzpicture}[scale=0.75]
\begin{scope}[rotate=90]
\draw (0,0,-0.3) rectangle (5,4,-0.3);
\draw (0,0,-.3)-- (0,0,.3)
      (5,0,-.3)-- (5,0,.3)
      (0,4,-.3)-- (0,4,.3)
      (5,4,-.3)-- (5,4,.3);
\draw [thick,->] (2.5,-0.2)-- (2.5,-1.3);
\draw [ultra thick] (0,-1.5)-- (5,-1.5);
\draw (5.2,2)node [above]{$\Xhat _{C_{0}\times E}$};
\draw (5.2,-1.5)node[above]{$C_{0}$};
\draw [ultra thick, pink](3,0,0)-- (3,4,0);
\draw (3,4.0,0) node[left]{$y_{i}\times E$};
\fill (3,-1.5) circle[radius=.1] node[right]{$a_{i}y_{i}$};
\foreach \x in {0.4,1.5,4.2,4.5}
{\draw [ultra thick, pink](\x ,0,0)-- (\x ,4,0);
\fill (\x ,-1.5) circle[radius=.1] ;};

\draw [tealgreen,ultra thick] (0,1)node[below,black]{$ C_{0}\times \{x_{0} \}$}-- (5,1);

\foreach \x in {3}
{
\draw [ultra thick,pink] (\x ,0)-- (\x ,4);
\foreach \y in {0,0.1,...,4.1} 
    \draw [very thick, purple]
(\x ,\y ,0 ) -- ({\x +0.25*cos(64*pi*\y )},\y,{0.25*sin(64*pi*\y )} ) ;
}

\foreach \p in {(1,1),(0.4,2.2),(2,2), (3,3),(3.8,.5)}
\shadedraw[ball color = gray] \p   circle (0.1);

\draw (0,0,.3) rectangle (5,4,.3);

\end{scope}
\end{tikzpicture}
\end{figure}
We may compute the Euler characteristic of $\Hilb _{\fix }^{0,\bullet
,\bullet } (\Xhat _{C_{0}\times E})$ by computing the Euler
characteristic of $\Sym ^{d} (C_{0})$, weighted by the constructible
function given by the Euler characteristic of the fibers of $\rho
_{d}$. In other words,

\begin{align*}
e\left(\Hilb _{\fix }^{0,d,n} (\Xhat _{C_{0}\times E}) \right) &= \int _{\Hilb _{\fix }^{0,d,n} (\Xhat _{C_{0}\times E}) } 1\,\, de\\
&=\int _{\Sym ^{d}C_{0}} (\rho _{d})_{*} (1) \,\,de.
\end{align*}
Writing 
\[
\Sym ^{\bullet } C_{0} = \sum _{d=0}^{\infty }\Sym
^{d}C_{0}\,q^{d}
\]
and extending the integration to the $\bullet $
notation in the obvious way, we get
\begin{equation}\label{eqn: e(Hilb(Xhat))=int rho* de}
e\left(\Hilb _{\fix }^{0,\bullet,\bullet}  (\Xhat _{C_{0}\times E}) \right) =\int _{\Sym ^{\bullet } C_{0}} \rho_{*} (1) \,\,de
\end{equation}
where the constructible function $\rho _{*} (1) $ takes values in $\ZZ
((p))$ and is given by
\[
\rho _{*} (1) (\sum _{i}a_{i}y_{i}) = e\left(\rho ^{-1} (\sum
_{i}a_{i}y_{i}) \right).
\]

We will prove that $\rho _{*} (1)$ only depends on the multiplicities
of the points in the symmetric product, not their location. 

\begin{proposition}\label{prop: rho* (1)= (1-p)^2/p*prod_i F (a_i)}
There exists a universal series $F (a)\in \ZZ [[p]]$ such that the
constructible function $\rho _{*} (1)$ is given by
\[
\rho _{*} (1) \left(\sum a_{i}y_{i} \right) =
\frac{p}{(1-p)^{2}}\prod _{i}F (a_{i}).
\]
\end{proposition}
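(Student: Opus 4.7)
The plan is to exploit the formal/\'etale locality of the fiber of $\rho _{d}$ over a point $\sum a_{i}y_{i}\in \Sym^{d}(C_{0})$, producing a motivic product decomposition into local factors at each $y_{i}$ (depending only on the multiplicity $a_{i}$) times a universal global factor coming from the pure vertical curve $C_{0}\times \{x_{0} \}$.

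The key geometric input is the formal triviality of $\Xhat _{C_{0}\times E}$ along $C_{0}$: for each $y\in C_{0}$, the formal neighborhood of $y\times E$ inside $\Xhat _{C_{0}\times E}$ is isomorphic to $\Spec \CC [[u,v]]\times E$, with $u$ a coordinate on $C_{0}$ vanishing at $y$ and $v$ a coordinate normal to $C_{0}$ in $S$. First I would define a local moduli space $\mathcal{L}(a)$ parameterizing subschemes of this formal model whose one-dimensional part is the union of $\{v=0 \}\times \{x_{0} \}$ with the $a$-th infinitesimal thickening of $\{u=v=0 \}\times E$, equipped with arbitrary compatible zero-dimensional structure supported in that formal neighborhood. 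By the trivialization, $\mathcal{L}(a)$ is intrinsically independent of $y$, so I would set $F(a):=e(\mathcal{L}(a))$, which lives in $\ZZ [[p]]$ after the usual $p^{n}$-bookkeeping.

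Second, I would stratify the fiber according to the support of the various zero-dimensional pieces of structure of a subscheme $Z$. Scissor relations give an equality of Euler characteristics expressing the fiber as a product of $\prod _{i}\mathcal{L}(a_{i})$ with a complementary ``background'' factor. Contributions from structure supported off $C_{0}\times \{x_{0} \}$ and off the horizontal curves vanish, because the available position space contains an $E$-factor of Euler characteristic zero. What remains is the contribution of the background curve $C_{0}\times \{x_{0} \}\cong \PP ^{1}$ together with nilpotent structure along it in the $S$-normal direction. Using $\Sym ^{n}\PP ^{1}\cong \PP ^{n}$ and the overall shift $p^{\chi (\O _{C_{0}\times \{x_{0} \}})}=p$, this background assembles into the universal factor
\[
\sum _{n\geq 0} e(\Sym ^{n}\PP ^{1})\,p^{n+1} \;=\; \frac{p}{(1-p)^{2}}.
\]

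The main obstacle will be the careful bookkeeping at the junction points $y_{i}\times \{x_{0} \}$, where embedded zero-dimensional structure may be ambiguously assigned either to the local model $\mathcal{L}(a_{i})$ or to the background. I would adopt the convention that \emph{all} structure supported in the formal neighborhood of $y_{i}\times E$ belongs to $\mathcal{L}(a_{i})$, and verify that with this convention the decomposition is motivic and the background series comes out independent of the $y_{i}$ and of $d$. A secondary technical subtlety is that one must confirm that floating points supported in $\Xhat _{C_{0}\times E}-(C_{0}\times \{x_{0} \})-\bigcup _{i}(y_{i}\times E)$ indeed contribute trivially via the $e(E)=0$ cancellation, which relies on the observation that such a floating point admits an unobstructed $E$-translation deformation disjoint from both the vertical and horizontal curve components. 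Once the decomposition is established, setting $a_{i}=0$ recovers the global factor $p(1-p)^{-2}$ as an independent consistency check.
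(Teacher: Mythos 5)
Your decomposition of the fiber is the same one the paper uses (restrict a subscheme to the formal neighborhoods $\Xhat_{\{y_i\}\times E}$ and to their complement $U\subset \Xhat_{C_0\times E}$; the paper justifies that these restrictions determine the subscheme by fpqc descent), but your normalization of the factors does not close up, and this is precisely the junction-point issue you flagged. With your convention that \emph{all} structure supported in $\Xhat_{\{y_i\}\times E}$ belongs to the local factor $\mathcal{L}(a_i)$, the background contribution is \emph{not} the universal series $p(1-p)^{-2}$: the locations available to background embedded points are $C_0\setminus \{y_1,\dotsc ,y_l\}$, so the $U$-contribution is $p(1-p)^{-2+l}$ and depends on the number $l$ of distinct points in the divisor. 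The product formula of the proposition only holds after redistributing one factor of $(1-p)$ into each local term, i.e.\ $F(a)=(1-p)\cdot e\bigl(\mathcal{L}(a)\bigr)$, which is exactly what the paper does; with your definition $F(a)=e\bigl(\mathcal{L}(a)\bigr)$ the claimed identity is off by $(1-p)^{l}$. Your consistency check at $a=0$ does not detect this because Lemma~\ref{lem: euler chars of sym} requires $F(0)=1$, whereas $e\bigl(\mathcal{L}(0)\bigr)=(1-p)^{-1}$ (the local series of a point of the bare curve, counting embedded points there together with floating points along $\{y\}\times E$).

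Two of your supporting claims are also incorrect as stated. First, off-curve floating structure does not vanish ``because the position space contains an $E$-factor of Euler characteristic zero'': after removing the vertical curve and the $y_i$-neighborhoods, the position space is $(C_0\setminus\{y_i\})\times (E\setminus\{x_0\})$, of Euler characteristic $l-2$, so floating points contribute nontrivial MacMahon factors; these cancel only against the MacMahon factor inside the punctual (vertex) series attached to points lying \emph{on} the curve, and it is this cancellation that produces the net factor $(1-p)^{-1}$ per point of $C_0\setminus\{y_i\}$. Relatedly, the background is not parameterized by $\Sym^{n}\PP^1$ — embedded points at a smooth point of a curve in a threefold have moduli beyond their length and location — and taking locations in all of $\PP^1$ double counts the junctions $y_i$ that your convention assigned to $\mathcal{L}(a_i)$; the coincidence with $p(1-p)^{-2}$ hides the missing $(1-p)^{l}$. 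Second, the multiplicity-$a$ horizontal component is not ``the $a$-th infinitesimal thickening'' of $\{y_i\}\times E$ (that has multiplicity $a(a+1)/2$); the thickening structure is itself a modulus inside $\mathcal{L}(a)$, which the paper integrates out using the $(\CC^{*})^{2}$-action, reducing to monomial thickenings $Z_{\alpha}\times E$ with $\alpha\vdash a$. The one part of your plan that is exactly right, and is also the paper's argument, is that formal triviality of $\Xhat_{C_0\times E}$ along $C_0$ makes the local factor independent of $y_i$, which is what makes $F(a)$ universal.
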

Deferring the proof of the proposition for the moment, we apply the
following lemma regarding weighted Euler characteristics of symmetric
products.

\begin{lemma}\label{lem: euler chars of sym}
Let $T$ be a scheme and let $\Sym ^{\bullet } (T)=\sum _{d=0}^{\infty
}\Sym ^{d} (T)\, q^{d}$. Suppose that $G$ is a constructible function
on $\Sym ^{d} (T)$ of the form $G (\sum _{i}a_{i}y_{i})=\prod _{i}g
(a_{i})$ where by convention $g (0)=1$. Then
\[
\int _{\Sym ^{\bullet }T} G\,\, de = \left(\sum _{a=0}^{\infty }g
(a)\, q^{a} \right)^{e (T)}.
\]
\end{lemma}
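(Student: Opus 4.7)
The plan is to stratify $\Sym^\bullet(T)$ by partition type and then reduce to the case of a finite set, where the identity becomes a multinomial expansion. For a partition $\lambda = (1^{m_1} 2^{m_2} \cdots)$, let $\Sym^\lambda(T) \subset \Sym^{|\lambda|}(T)$ be the locally closed stratum consisting of cycles $\sum_i a_i y_i$ with exactly $m_k$ distinct points of multiplicity $k$. The hypothesis on $G$ guarantees that $G$ is constant on $\Sym^\lambda(T)$ with value $\prod_a g(a)^{m_a}$, so
\[
\int_{\Sym^\bullet T} G\, de = \sum_{\lambda} \Bigl(\prod_a g(a)^{m_a}\Bigr) q^{\sum a m_a}\, e\bigl(\Sym^\lambda(T)\bigr),
\]
the sum being over all partitions.

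The key observation is that $\Sym^\lambda(T)$ is the open subvariety of $\prod_a \Sym^{m_a}(T)$ cut out by the condition that all chosen points (across all values of $a$) are pairwise distinct. By inclusion-exclusion on the pairwise collision loci, together with Macdonald's formula $e(\Sym^n T) = \binom{e(T)+n-1}{n}$, each $e(\Sym^\lambda(T))$ is a \emph{polynomial} in $e(T)$. Consequently both sides of the desired identity are polynomials in $e(T)$ with coefficients in $\ZZ[[q]]$ depending only on the function $g$. To prove the identity it therefore suffices to verify it for $e(T)=N$ for all non-negative integers $N$, i.e., for $T$ equal to a disjoint union of $N$ points.

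For $T=\{t_1,\dots,t_N\}$, a point of $\Sym^d(T)$ is a tuple $(b_1,\dots,b_N)$ of non-negative integers with $\sum b_i = d$, and $G$ takes the value $\prod_i g(b_i)$ there. Summing over $d$,
\[
\int_{\Sym^\bullet T} G\, de = \sum_{b_1,\dots,b_N \geq 0} \prod_{i=1}^N g(b_i)\, q^{b_i} = \Bigl(\sum_{a\geq 0} g(a) q^a\Bigr)^N,
\]
which is the right-hand side for $e(T)=N$. The main obstacle is the polynomiality step: one must verify that the inclusion-exclusion expression for $e(\Sym^\lambda(T))$ really does assemble into a polynomial in $e(T)$ (rather than some more complicated invariant), and that this polynomiality extends to the full generating series so that interpolation from integer values is legitimate. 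Once that bookkeeping is in place, the remainder is the straightforward finite-set computation above.
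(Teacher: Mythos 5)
Your proof is correct, but it is not the route the paper itself takes: the paper disposes of this lemma in one line by invoking the pre-lambda ring structure that symmetric products define on $K_{0} (\Var _{\CC })$ and the compatibility of the Euler characteristic homomorphism with that structure (delegating an elementary proof to \cite{Bryan-Kool}), whereas you give a self-contained elementary argument: stratify $\Sym ^{d} (T)$ by multiplicity type $\lambda $, observe $G$ is constant on each stratum, show $e$ of each stratum is a universal polynomial in $e (T)$, and then interpolate from the case of $T$ a finite set, where the identity is the multinomial expansion. The structural approach buys brevity and generality (it is really a statement about power structures/pre-lambda rings), while yours buys transparency and is essentially the ``elementary proof'' the paper cites. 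Two remarks on your argument. First, the step you flag as the main obstacle can be handled more cleanly than by inclusion-exclusion on collision loci inside $\prod _{a}\Sym ^{m_{a}} (T)$: the stratum $\Sym ^{\lambda } (T)$ is the free quotient of the ordered configuration space $F (T,n)$ of $n=\sum _{a}m_{a}$ pairwise-distinct points by $\prod _{a}S_{m_{a}}$, so $e\left(\Sym ^{\lambda } (T) \right)=\frac{1}{\prod _{a}m_{a}!}\,e (T) (e (T)-1)\dotsm (e (T)-n+1)$, where $e (F (T,n))$ is computed by induction using the forgetful map $F (T,n)\to F (T,n-1)$ whose fibers all have Euler characteristic $e (T)- (n-1)$ (Euler characteristic pushes forward along constructible maps). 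This gives the needed polynomiality in $e (T)$ with no bookkeeping. Second, your interpolation step is not merely cosmetic: for fixed $d$ the coefficient of $q^{d}$ on the right-hand side, read as the $d$-th coefficient of $\exp \left(x\log \sum _{a}g (a)q^{a} \right)$, is a polynomial in $x$, so agreement at all non-negative integers forces agreement at all integers --- which matters, since the paper applies the lemma with $e (T)$ equal to $-1$, $0$, and $-22$.
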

This lemma is a consequence of the fact that symmetric products define
a pre-lambda ring structure on the Grothendieck group of varieties and
the Euler characteristic homomorphism is compatible with that
structure. An elementary proof is given in \cite{Bryan-Kool}.

Applying Lemma~\ref{lem: euler chars of sym} to Proposition~\ref{prop:
rho* (1)= (1-p)^2/p*prod_i F (a_i)} and combining with equations
\eqref{eqn: DT0=e(Hilb(Xhat))*e(Hilb(U))}, \eqref{eqn: e(Hilb
(U))=prod(1-q^m)^(-22)}, and \eqref{eqn: e(Hilb(Xhat))=int rho* de}
and we see that
\begin{equation}\label{eqn: DT0 =p/(1-p^2) (F series)^2*prod (1-q^m)^{-22}}
q\,\DThat _{0} (X) = \frac{p}{(1-p)^{2}}\left(\sum _{a=0}^{\infty }F
(a)\,q^{a} \right)^{2}\cdot \prod _{m=1}^{\infty } (1-q^{m})^{-22}.
\end{equation}

To finish the computation of $\DThat _{0} (X)$, we need to prove
Proposition~\ref{prop: rho* (1)= (1-p)^2/p*prod_i F (a_i)} and compute
the series $\sum _{a}F (a)q^{a}$.

\subsection{Proof of Proposition~\ref{prop: rho* (1)= (1-p)^2/p*prod_i
F (a_i)} and the computation of $\sum _{a}F (a)q^{a}$.}\label{subsec:
computation of F(a), smooth fiber contributions} The fiber $\rho ^{-1}
(\sum a_{i}y_{i})$ parameterizes subschemes supported on $\Xhat
_{C_{0}\times E}$ which have fixed curve support
\[
C_{0}\times x_{0} \cup _{i} \{y_{i} \}\times E
\]
where the multiplicity of the subscheme along $\{y_{i} \}\times E$ is
$a_{i}$. Such a subscheme is \emph{uniquely} determined\footnote{This
follows from fpqc descent since the set $U$ and the sets $\Xhat
_{\{y_{i} \}\times E}$ form a fpqc cover. Since $C_{0}\times x_{0}$ is
reduced there are no conditions on the overlaps of the cover. Thus the
subscheme is \emph{uniquely} determined by its restriction to the
cover.} by its restriction to the formal neighborhoods $\Xhat
_{\{y_{i} \}\times E}$ and their complement $U$ in $\Xhat
_{C_{0}\times E}$. The resulting stratification leads to a product
decomposition for the Grothendieck group valued power series $\rho
^{-1} (\sum a_{i}y_{i})$ giving the product formula in
Proposition~\ref{prop: rho* (1)= (1-p)^2/p*prod_i F (a_i)}. The factor
$p (1-p)^{-2}$ comes from the contribution of $U$ and it is the series
for the Hilbert scheme of subschemes of $\Xhat _{C_{0}\times E}$ with
fixed curve support $C_{0}\times x_{0}$ (no curves in the $E$
direction). The moduli for this Hilbert scheme comes from floating
points and embedded points (see \cite{Bryan-Kool} for details).

The series $F (a)$ is given by
\[
F (a) = (1-p)\cdot e\left(\Hilb ^{0,a,\bullet }\left(\Xhat _{\{y_{i}\}\times E } \right) \right)
\]
where 
\[
\Hilb ^{0,a,\bullet }\left(\Xhat _{\{y_{i}\}\times E} \right)\subset
\Hilb ^{0,a,\bullet } (X)
\]
is the locus parameterizing subschemes $Z$ whose curve support is
given by the union of $C_{0}\times \{x_{0} \}$ and an $a$-fold
thickening of $\{y_{i} \}\times E$ and such that all embedded points
of $Z$ are supported on $\Xhat _{\{y_{i} \}\times E}$. The prefactor
$(1-p)$ comes from the contribution of the complement $U$: the overall
contribution of $U$ is given by $p (1-p)^{-2+l}$ where $l$ is the
number of $y_{i}$'s and so we have redistributed the $l$ copies of
$(1-p) $ into the $F (a_{i})$ factors.

Since 
\[
\Xhat _{\{y_{i} \}\times E} \cong \Spec\left(\CC [[u,v]] \right)\times
E,
\]
we get an action of $\left(\CC ^{*} \right)^{2}$ on the corresponding
Hilbert scheme. Only the $\left(\CC ^{*} \right)^{2}$ fixed points
contribute to the Euler characteristic so
\begin{align*}
F (a) &= (1-p)\cdot e\left(\Hilb ^{0,a,\bullet }\left(\Xhat _{\{y_{i}\}\times E } \right)^{(\CC ^{*})^{2}} \right)\\
&= (1-p)\sum _{\alpha \vdash a} e\left(\Hilb ^{0,\alpha ,\bullet
}\left(\Xhat _{\{y_{i} \}\times E} \right) \right)
\end{align*}
where $\Hilb ^{0,\alpha ,\bullet }\left(\Xhat _{\{y_{i} \}\times E}
\right)$ parameterizes subschemes whose curve component is the
\emph{unique } curve given by the union of $C_{0}\times \{x_{0} \}$
and $Z_{\alpha }\times E$ where $Z_{\alpha }\subset \Spec (\CC
[[u,v]])$ is the length $a$ subscheme given by the monomial ideal
determined\footnote{i.e. identifying the partition $\alpha $ with its
Ferrer's diagram $\alpha \subset (\ZZ _{\geq 0})^{2}$, the ideal of
$Z_{\alpha } $ is generated by the monomials $u^{i}v^{j}$ where
$(i,j)\not\in \alpha$.} by the partition $\alpha \vdash a$.

To compute $e \left(\Hilb ^{0,\alpha ,\bullet }\left(\Xhat _{\{y_{i}
\}\times E} \right) \right)$ we can now integrate over the fibers of
the constructible morphism
\[
\sigma : \Hilb ^{0,\alpha ,\bullet }\left(\Xhat _{\{y_{i}
\}\times E} \right) \to \Sym ^{\bullet }E
\]
which is defined by recording the length and locations of the embedded
points. We thus get
\[
\int _{\Hilb ^{0,\alpha ,\bullet }\left(\Xhat _{\{y_{i}
\}\times E} \right)} \,\,de = \int _{\Sym ^{\bullet }E} \sigma _{*} (1)\,de .
\]
The constructible function $\sigma _{*} (1)$ is a product of local
contributions which only depend on the length of the embedded point
and whether or not the location of the embedded point is $x_{0} $ or
not (recall that $x_{0}$ is where the curve $C_{0}\times \{x_{0} \}$
is attached to the curve $Z_{\alpha }\times E$ ).  Writing the series
for the local contributions at $x_{0}$ and at the general point as
$\Vsf _{\emptyset (1)\alpha } (p)$ and $\Vsf _{\emptyset \emptyset
\alpha } (p)$ respectively, and applying Lemma~\ref{lem: euler chars
of sym} we get
\begin{align*}
\int _{\Sym ^{\bullet }E} \sigma _{*} (1)\,de &= \left(\Vsf _{\emptyset
(1)\alpha } (p) \right)\cdot \left(\Vsf _{\emptyset \emptyset
\alpha } (p) \right)^{e (E-x_{0})} \\
&= \frac{\Vsf _{\emptyset
(1)\alpha } (p)}{\Vsf _{\emptyset \emptyset \alpha } (p)}.
\end{align*}

The above naming of the local contributions is not accidental --- the
generating functions for the contributions are given by the
topological vertex. In general, the topological vertex $\Vsf _{\mu
_{1}\mu _{2}\mu _{3}} (p)$ can be defined as the generating function
of the Euler characteristics of the Hilbert schemes $\Hilb ^{n}
\left(\widehat{\CC }_{0}^{3},\{\mu _{1},\mu _{2},\mu _{3} \} \right)$,
which by definition parameterize subschemes of $\CC ^{3} $ given by
adding at the origin a length $n$ embedded point to the fixed curve
$Z_{\mu _{1}}\cup Z_{\mu _{2}}\cup Z_{\mu _{3}}$. Here $Z_{\mu _{i}}$
is supported on the $i$th coordinate axis and given by the monomial
ideal determined by the partition $\mu _{i}$ in the transverse
directions. Because $\left(\CC ^{*} \right)^{3}$ acts on these Hilbert
schemes, their Euler characteristics can be computed by counting
$\left(\CC ^{*} \right)^{3}$ fixed points, namely monomial
ideals. This leads to the combinatorial interpretation of $\Vsf _{\mu
_{1}\mu _{2}\mu _{3}} (p)$ --- it is the generating function for the
number of 3D partitions with asymptotic legs given by $\{\mu _{1},\mu
_{2},\mu _{3} \}$.

We thus get the following formula
\[
\sum _{a=0}^{\infty }F (a)q^{a} = \sum _{\alpha } \,q^{|\alpha |}
(1-p)\,\frac{\Vsf _{\emptyset (1)\alpha } (p)}{\Vsf _{\emptyset
\emptyset \alpha } (p)}
\]
which completes the proof of Proposition~\ref{prop: rho* (1)=
(1-p)^2/p*prod_i F (a_i)}.

\begin{lemma}\label{lem: formula for sum F (a)q^a}
The generating function for the universal series $F (a)$ is given by
the following formula
\[
\sum _{a=0}^{\infty }F (a)q^{a} = \prod _{m=1}^{\infty } \frac{(1-q^{m})}{(1-pq^{m}) (1-p^{-1}q^{m})} .
\]
\end{lemma}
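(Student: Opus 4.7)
The plan is to evaluate the sum
\[
\sum_{a\geq 0} F(a) q^a = (1-p)\sum_\alpha q^{|\alpha|} \frac{\Vsf_{\emptyset (1)\alpha}(p)}{\Vsf_{\emptyset \emptyset \alpha}(p)}
\]
obtained at the end of the previous subsection, using a closed-form expression for the topological vertex together with a Cauchy-type summation identity. As a first step I would apply the Okounkov-Reshetikhin-Vafa formula for the topological vertex, which writes the two-leg expression $\Vsf_{\emptyset \mu \alpha}(p)$ as MacMahon's function $M(p)$, times a Schur function $s_{\alpha^t}$ evaluated at the principal specialization $p^{\rho}$, times a further factor $s_\mu$ evaluated at a principal specialization shifted by $\alpha^t$. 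In the ratio, the MacMahon factor and $s_{\alpha^t}(p^\rho)$ both cancel, leaving a single-variable Schur function at a specialization depending only on $\alpha^t$:
\[
\frac{\Vsf_{\emptyset (1)\alpha}(p)}{\Vsf_{\emptyset \emptyset \alpha}(p)} = s_{(1)}\bigl(p^{\alpha^t + \rho}\bigr),
\]
which after careful treatment of the divergent tail (where $\alpha^t_i = 0$ for $i$ large) becomes a finite sum over the columns of $\alpha$ plus an $\alpha$-independent universal correction.

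Next I would substitute this expression back into the partition sum and interchange the order of summation. This reduces the computation to evaluating $\sum_\alpha q^{|\alpha|} s_{(1)}(p^{\alpha^t+\rho})$, in which the inner partition sum with a single Schur insertion is a standard application of a Cauchy-type identity. The identity produces an infinite product of the shape $\prod_{m\geq 1}(1-pq^m)^{-1}(1-p^{-1}q^m)^{-1}$, and the $(1-p)$ prefactor combined with a Jacobi triple product manipulation yields the numerator $\prod_{m\geq 1}(1-q^m)$ appearing in the claimed formula.

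The main obstacle is the regularization of the infinite principal specialization in the first step and the precise matching of conventions (for the vertex normalization, the direction of the principal specialization, and the sign of the weight $p$) to obtain the clean product form. A useful consistency check, before handling the general case, is to verify low-order terms: one expects $F(0) = 1$ from the empty partition, while the coefficients of $q$ and $q^2$ computed from the vertex sum (over $\alpha = \emptyset,\, (1),\, (2),\, (1,1)$) should match the series expansion of $\prod_m (1-q^m)/((1-pq^m)(1-p^{-1}q^m))$, which can be read off by direct expansion.
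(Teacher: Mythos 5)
Your first step is sound and coincides with the paper's: applying the Okounkov--Reshetikhin--Vafa formula, the MacMahon factor and $s_{\alpha^t}(p^\rho)$ cancel in the ratio, leaving (up to normalization) $s_{(1)}\bigl(p^{\alpha^t+\rho}\bigr)=\sum_i p^{\pm(\alpha^t_i-i+1/2)}$. Your worry about conventions is also warranted and is more than a regularization of the tail: the paper's $\Vsf$ is normalized by counting embedded points (each series begins with $1$), so $(1-p)\Vsf_{\emptyset (1)\alpha}/\Vsf_{\emptyset\emptyset\alpha}$ is a power series in $p$ with constant term $1$, whereas the $q^1$-coefficient of the claimed product is $p+p^{-1}-1$; the discrepancy is an $\alpha$-dependent power of $p$ recording the finite overlap of the $(1)$-leg with the $\alpha$-leg (the local contribution to $\chi(\O_Z)$), and only after inserting it does the ratio literally become $p^{-1/2}\sum_i p^{i-1/2-\alpha_i}$. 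That is bookkeeping, though, and can be carried out.

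The genuine gap is in your evaluation of the resulting partition sum. The sum $\sum_\alpha q^{|\alpha|}\,s_{(1)}\bigl(p^{\alpha^t+\rho}\bigr)=\sum_\alpha q^{|\alpha|}\sum_i p^{\alpha_i-i+1/2}$ is \emph{not} a Cauchy-type identity: in the Cauchy identity the partition indexes the Schur functions and the arguments are fixed, whereas here a fixed Schur function is evaluated at an argument depending on the summation variable $\alpha$, with an extra weight $q^{|\alpha|}$. This sum is exactly the (unnormalized) Bloch--Okounkov one-point correlation function, i.e.\ a trace over Fock space of $q^{L_0}$ against the operator reading off the shifted parts, and its evaluation as a Jacobi theta quotient is precisely the nontrivial input the paper invokes (Bloch--Okounkov, with details in Bryan--Kool--Young); no standard Cauchy or triple-product manipulation produces it. Your factor bookkeeping reflects this: in the correct evaluation the numerator $\prod_m(1-q^m)$ arises from the $(1-q^m)^2$ in the theta function of the one-point function set against the $\prod_m(1-q^m)^{-1}$ coming from summing over all partitions, while the prefactor $(1-p)$ merely cancels the $(p^{1/2}-p^{-1/2})^{-1}$ pole of the theta quotient --- it does not generate $\prod_m(1-q^m)$ via a Jacobi triple product step. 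If you replace the ``Cauchy-type identity'' step by the Bloch--Okounkov one-point function (or an equivalent Fock-space trace computation), your argument closes and is essentially the paper's proof.
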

\begin{proof}
Using the Okounkov-Reshetikhin-Vafa formula for the vertex
\cite[eqn~3.20]{Okounkov-Reshetikhin-Vafa}, the sum
\[
\sum _{\alpha } \,q^{|\alpha |}
(1-p)\,\frac{\Vsf _{\emptyset (1)\alpha } (p)}{\Vsf _{\emptyset
\emptyset \alpha } (p)}
\]
can be expressed as the trace of a certain natural operator on Fock
space. It can be evaluated explicitly by a theorem of Bloch-Okounkov
\cite[Thm~6.5]{Bloch-Okounkov}. The result is the product formula
given by the lemma.  See \cite{Bryan-Kool-Young} for details.
\end{proof}

Substituting the formula of the lemma into equation~\eqref{eqn: DT0
=p/(1-p^2) (F series)^2*prod (1-q^m)^{-22}} we get
\[
\DThat _{0} (X) = \frac{pq^{-1}}{(1-p)^{2}}\prod _{m=1}^{\infty } (1-q^{m})^{-20} (1-pq^{m})^{-2} (1-p^{-1}q^{m})^{-2}
\]
which proves the $g=0$ formula in Theorem~\ref{thm: main theorem: DTh
for h=0 and h=1}, assuming that we can show 
\[
\DT _{0} (X) = -\DThat _{0} (X).
\]
We will address this issue in section~\ref{sec: putting in
the Behrend function}.

\section{The case of $h=1$. }
   
We now consider the case where $S$ has a primitive curve class $\beta
$ with $\beta ^{2}=0$. Such $K3$ surfaces are elliptically fibered
with fiber class $\beta $. By our genericity assumption, we may assume
that the elliptic fibration $\pi :S\to \PP ^{1}$ has 24 singular
fibers, all of which are nodal, and we will further assume that the
fibration has a section (see figure~\ref{fig: elliptic fibration}).
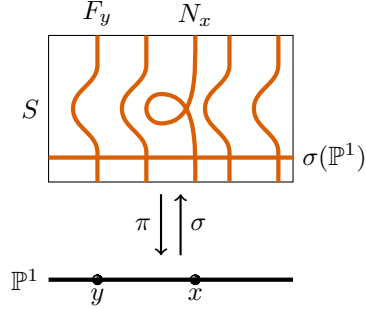
\begin{figure}
\caption{$\pi :S\to \PP ^{1}$ is a elliptic fibration with 24 nodal
fibers and a section $\sigma $. Figure depicts a smooth fiber $F_{y}$
over a point $y\in \PP^{1}$ and a nodal fiber $N_{x}$ over a point
$x\in \PP ^{1}$.}\label{fig: elliptic fibration}

\begin{tikzpicture}[scale=0.65]

\draw (0,0) rectangle (5,3);

\foreach \x in {0,-1,1.7,2.7}
\draw [ultra thick,orange] 
                   (2+\x   ,0  )
to [out=90,in=-90] (2+\x   ,0.6 )
to [out=90,in=-90] (1.5+\x ,1.5  )
to [out=90,in=-90] (2+\x   ,2.4  )
to [out=90,in=-90] (2+\x   ,3    );
\node[above] at (1,3) {$F_{y}$};
\draw[ball color=black] (1,-2)node[below]{$y$}   circle (0.1);

\draw [ultra thick,orange] 
                    (3   ,0    ) 
to [out=90,in=0]    (2.3 ,1.8  ) 
to [out=180,in=90]  (2   ,1.5  ) 
to [out=270,in=180] (2.3 ,1.2  ) 
to [out=0,in=270]   (3   ,3    );
\node[above] at (3,3) {$N_{x}$};
\draw[ball color=black] (3,-2)node[below]{$x$}   circle (0.1);

\draw [ultra thick, orange] (0,.5)-- (5,.5);
\node[right] at (5,.5) {$\sigma (\PP ^{1})$};
\draw [thick,->] (2.3,-.25)--node[left]{$\pi $} (2.3,-1.5);
\draw [thick,->] (2.7,-1.5)--node[right]{$\sigma  $} (2.7,-.25);
\draw [ultra thick] (0,-2)--(5,-2);
\node [left] at (0,-2) {$\PP ^{1}$};
\node [left] at (0,1.5) {$S$};

\end{tikzpicture}
\end{figure}

Recall that the Hilbert scheme decomposes into a disjoint union
\[
\Hilb ^{1,d,n} (X) = \Hilb _{\Vert }^{1,d,n} (X) \sqcup \Hilb _{\diag
}^{1,d,n} (X).
\]
We can fix a slice for the $E$ action on $\Hilb _{\Vert }^{1,d,n} (X)$
by requiring that the unique vertical curve lies in $S\times \{x_{0}
\}$. In the case where the subscheme has a diagonal curve, we require
that the diagonal curve intersects the slice $S\times \{x_{0} \}$
somewhere on the section. Denoting the above conditions with the
subscript $\fix $, we get
\[
\Hilb ^{1,d,n} (X)/E\cong \Hilb ^{1,d,n}_{\Vert ,\fix } (X)\sqcup
\Hilb ^{1,d,n}_{\diag ,\fix } (X)
\]
and so 
\[
q\,\DThat _{1} (X) = e\left(\Hilb ^{1,\bullet ,\bullet }_{\Vert ,\fix
} (X) \right)+e\left(\Hilb ^{1,\bullet ,\bullet }_{\diag ,\fix } (X)
\right).
\]

We get a map
\[
\tau :\Hilb ^{1,\bullet ,\bullet }_{\Vert ,\fix } (X)\to \PP ^{1}
\]
induced by the elliptic fibration $S\to \PP ^{1}$ since each subscheme
parameterized by $\Hilb ^{1,\bullet ,\bullet }_{\Vert ,\fix } (X)$
has a unique vertical curve which is a fiber curve. Let $F_{y}$ denote
the fiber of $S\to \PP ^{1}$ over $y$. Let
\[
\Hilb _{F_{y}}^{1,d,n} (X)\subset \Hilb _{\Vert ,\fix
}^{1,d,n} (X)
\]
denote the sublocus which parameterizes subschemes whose unique
vertical component is $F_{y}\times \{x_{0} \}$.

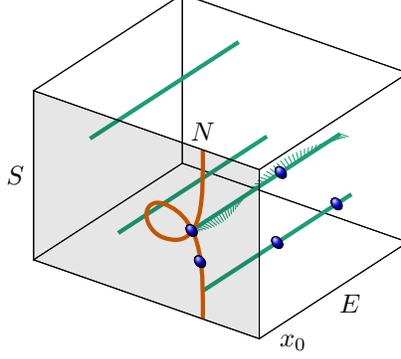
\begin{figure}
\caption{A configuration which includes a thickened horizontal curve
(green) attached to the node of a nodal vertical curve (orange). For
the contribution to be non-zero, embedded points (blue) must occur
along horizontal curves attached to the vertical curve or on the
vertical curve.  }\label{fig: h=1 configurations}
\begin{tikzpicture}[
                    z  = {-15},
                    scale = 0.75]

\begin{scope}[yslant=-0.35,xslant=0]


\begin{scope} [canvas is yz plane at x=0]
\draw [black](0,0) rectangle (3,5);
\end{scope}
\begin{scope} [canvas is xz plane at y=0]
\draw [black](0,0) rectangle (4,5);
\end{scope}
\draw [black](0,0) rectangle (4,3);

\foreach \x in {2.8}
\foreach \y in {1.5}
{
\draw [ultra thick,tealgreen] (\x ,\y,0)-- (\x ,\y,5);
\foreach \z in {0,0.1,...,5.1} 
    \draw [thin, tealgreen]
(\x ,\y ,\z ) -- ({\x +0.15*cos(24*pi*\z )},{\y+0.15*sin(24*pi*\z )},\z ) ;
}

\draw [ultra thick, tealgreen] (3,.5,0)--(3,.5,5) (1.0,2.5,0)-- (1.0,2.5,5) (1.5,1,0)-- (1.5,1,5);

\draw [ultra thick,orange] 
                    (3   ,0   ,5) 
to [out=90,in=0]    (2.3 ,1.8 ,5) 
to [out=180,in=90]  (2   ,1.5 ,5) 
to [out=270,in=180] (2.3 ,1.2 ,5) 
to [out=0,in=270]   (3   ,3   ,5);


\foreach \p in {
(2.8,1.5,5), 
(3,.5,.5), 
(3,.5,2.5),
(2.95,1,5),
(2.8,1.5,2)
}
\shadedraw[ball color = blue] \p   circle (0.1);

\node [above] at (3,3,5) {$N$};

\node [left] at (0,1.5,5) {$S$};
\node [right] at (4.2,0,3) {$E$};
\node [right] at (4.2,0,5) {$x_{0}$};

\begin{scope} [canvas is yz plane at x=4]
\draw [black](0,0) rectangle (3,5);
\end{scope}
\begin{scope} [canvas is xz plane at y=3]
\draw [black](0,0) rectangle (4,5);
\end{scope}
\draw [black](0,0,5) rectangle (4,3,5);
\draw [black,fill, opacity=0.1](0,0,5) rectangle (4,3,5);

\end{scope}
\end{tikzpicture}
\end{figure}

We will see below that the Euler characteristic of $\Hilb
_{F_{y} }^{1,d,n} (X)$ only depends on the topological
type of the fiber, i.e. whether it is smooth or nodal. We write a
generic smooth fiber as $F$ and any nodal fiber as
$N$. Integrating over the fibers of $\tau $, we get
\[
e\left(\Hilb ^{1,\bullet ,\bullet }_{\Vert ,\fix } (X) \right) =
-22e\left(\Hilb ^{1,\bullet ,\bullet }_{F} (X)
\right) + 24e\left(\Hilb ^{1,\bullet ,\bullet }_{N}
(X) \right)
\]
where the $-22$ is $e (\PP ^{1}-24\text{pts})$. See figure~\ref{fig:
h=1 configurations} for a depiction of a curve configuration
corresponding to a point in $\Hilb ^{1,\bullet ,\bullet }_{N} (X)$.

The computation of $e\left(\Hilb ^{1,\bullet ,\bullet }_{F} (X)
\right)$ and $e\left(\Hilb ^{1,\bullet ,\bullet }_{N} (X) \right)$
follows the same strategy as the computation of $e\left(\Hilb
^{0,\bullet ,\bullet }_{\fix } (X) \right)$ done in section~\ref{sec:
computing DThat for h=0}. We use the product decompositions
\begin{align}\label{eqn: Hilb/E=Hilb(X-FxE)*Hilb(Xhat)}
\Hilb ^{1,\bullet ,\bullet }_{F} (X)&= \Hilb _{F}^{1,\bullet ,\bullet
}\left(\Xhat _{F\times E} \right)\cdot \Hilb ^{1,\bullet ,\bullet
}\left(X-F\times E \right)\\ \label{eqn: Hilb/E=Hilb(X-NxE)*Hilb(Xhat)}
\Hilb ^{1,\bullet ,\bullet }_{N} (X)&= \Hilb _{N}^{1,\bullet ,\bullet
}\left(\Xhat _{N\times E} \right)\cdot \Hilb ^{1,\bullet ,\bullet
}\left(X-N\times E \right)
\end{align}
and we use the extra $E$ actions on the second factors to deduce
\begin{align*}
e\left(\Hilb _{F}^{1,\bullet ,\bullet } (X) \right)&= e\left(\Hilb _{F}^{1,\bullet ,\bullet }\left(\Xhat _{F\times E} \right) \right)\cdot \prod _{m=1}^{\infty } (1-q^{m})^{-24}\\
e\left(\Hilb _{N}^{1,\bullet ,\bullet } (X) \right)&= e\left(\Hilb
_{N}^{1,\bullet ,\bullet }\left(\Xhat _{N\times E} \right)
\right)\cdot \prod _{m=1}^{\infty } (1-q^{m})^{-23}
\end{align*}
where $24 = e (S-F)$ and $23=e (S-N)$.

Proceeding as we did in section~\ref{sec: computing DThat for h=0}, we
use the maps
\begin{align*}
\rho :\Hilb _{F}^{1,\bullet ,\bullet }\left(\Xhat _{F\times E} \right)
&\to \Sym ^{\bullet } (F)\\
\rho :\Hilb _{N}^{1,\bullet ,\bullet }\left(\Xhat _{N\times E} \right)
&\to \Sym ^{\bullet } (N)
\end{align*}
which record the location and multiplicity of the horizontal
components. The argument proceeds exactly as it did in
section~\ref{sec: computing DThat for h=0} with $F$ and $N$ playing
the role of $C_{0}$.

The result for the smooth fiber case is the following:
\begin{align}\label{eqn: e(Hilb(Xhat_{FxE}))=1}
 \nonumber
\int _{\Hilb _{F}^{1,\bullet ,\bullet }\left(\Xhat _{F\times E} \right)} de &=\int _{\Sym ^{\bullet } (F)}\rho _{*} (1)\,de\\
&=\left(p^{1/2} (1-p)^{-1} \right)^{e (F)}\cdot \left(\sum _{a=0}^{\infty }F (a)q^{a} \right)^{e (F)}\\
 \nonumber
&=1.
\end{align}
This result comports with the heuristic that $F$ acts on $\Xhat
_{F\times E}$ and hence on $\Hilb _{F}^{1,\bullet ,\bullet
}\left(\Xhat _{F\times E} \right)$ and so the Euler characteristic is
0 except for the unique $F$-fixed subscheme, i.e. the subscheme
consisting of just the curve $F\times \{x_{0} \}$ with no added
horizontal components or embedded points. However, this is only a
heuristic: $F$ does \emph{not} act algebraically on the formal
neighborhood $\Xhat _{F\times E}$ since the elliptic fibration is not
isotrivial\footnote{However, see section~\ref{sec: putting in the
Behrend function} for the action of a related group.}.

The situation for nodal fibers is a little different because of the
presence of the nodal point $z\in N$. The constructible function $\rho
_{*} (1)$, which is given by taking the Euler characteristic of the
fibers of the map
\[
\rho :\Hilb ^{1,\bullet ,\bullet }_{N} \left(\Xhat _{N\times E} \right) \to \Sym ^{\bullet }N,
\]
has the following form. Let $y_{1},\dotsc ,y_{l}$ be non-singular
points of $N$ and let $z\in N$ be the nodal point. Then $\rho ^{-1}
(bz+\sum a_{i}y_{i})$ parameterizes subschemes of $X$, supported on
$\Xhat _{N\times E}$, which have fixed curve support
\[
N\times \{x_{0} \}\cup \{z \}\times E \cup _{i}\{y_{i} \}\times E
\]
where the multiplicity along $\{z \}\times E$ is $b$ and the
multiplicity along $\{y_{i} \}\times E$ is $a_{i}$. Such a subscheme
is determined by its restriction to the formal neighborhoods $\Xhat
_{\{z \}\times E}$, $\Xhat _{\{y_{1} \}\times E},\dotsc , \Xhat
_{\{y_{l}\}\times E }$ and their complement $U$. The contribution of
the Euler characteristic of $U$ is given by
\[
(1-p)^{-e (N^{\circ })} = (1-p)^{l}
\]
where $N^{\circ }=N-\{z,y_{1},\dotsc ,y_{l} \}$. Therefore we see that
\[
\rho _{*} (1) (bz+\sum a_{i}y_{i}) = N (b)\prod _{i=1}^{l}F (a_{i})
\]
where $F (a)$ is as in section~\ref{sec: computing DThat for h=0}, and 
\[
N (b) = e\left(\Hilb ^{1,b,\bullet }\left(\Xhat _{\{z \}\times E}
\right) \right)
\]
where 
\[
\Hilb ^{1,b,n}\left(\Xhat _{\{z \}\times E} \right)\subset \Hilb ^{1,b,n} (X)
\]
is the sublocus parameterizing subschemes $Z$ whose curve support is
given by the union of $N\times \{x_{0} \}$ and a $b$-fold thickening
of $\{z \}\times E$ and such that all embedded points are supported on
$\Xhat _{\{z \}\times E}$.

So pushing the integral to $\Sym ^{\bullet }N$ and applying
lemma~\ref{lem: euler chars of sym} we get
\begin{align*}
\int _{\Hilb ^{1,\bullet ,\bullet }_{N}\left(\Xhat _{N\times E} \right)}1\,de &= \int _{\Sym ^{\bullet }N}\rho _{*} (1) de\\
&=\int _{\Sym ^{\bullet } (N-\{z \})} \prod _{i}F (a_{i})\,de \cdot \int _{\Sym ^{\bullet } (\{z \})} N (b)\,de\\
&=\left(\sum _{a=0}^{\infty } F (a)q^{a} \right)^{e (N-\{z \})} \cdot \left(\sum _{b=0}^{\infty }N (b)q^{b} \right).
\end{align*}
Note that $e (N-\{z \})=0$ so that the $F (a)$ term doesn't contribute. 

We compute the $N (b)$ contribution by using the $\left(\CC ^{*}
\right)^{2}$ action on 
\[
\Xhat _{\{z \}\times E}\cong \Spec (\CC
[[u,v]])\times E
\]
and arguing as in section~\ref{sec: computing DThat
for h=0}. We find
\begin{align*}
\sum _{b=0}^{\infty }N (b)q^{b}
&=\sum _{b=0}^{\infty }e\left(\Hilb ^{1,b ,\bullet }\left(\Xhat _{\{z \}\times E} \right) \right)q^{b}\\
&=\sum _{b=0}^{\infty }\sum _{\beta\vdash b } e\left(\Hilb ^{1,\beta ,\bullet }\left(\Xhat _{\{z \}\times E} \right) \right)q^{b}\\
&=\sum _{\beta } q^{|\beta |} \frac{\Vsf _{(1) (1)\beta } (p)}{\Vsf _{\emptyset \emptyset \beta } (p)}.
\end{align*}
We see that fact that the curve $N$ has a node is manifest in the term
in the numerator: the vertex $\Vsf _{(1) (1)\beta } (p)$ is counting
curve configurations which are locally monomial at the nodal point
$\{z \}\times \{x_{0} \}$ where the curve is degree 1 along the two
branches of the node and has the monomial thickening given by $\beta $
along the $E$ direction.

Putting this and the earlier computations together, we find that the
total contribution of the components with vertical curves is given by
the following:
\begin{align*}
e\left(\Hilb ^{1,\bullet ,\bullet }_{\Vert ,\fix } (X) \right)=&
-22\prod _{m=1}^{\infty } (1-q^{m})^{-24} \quad +\,\,\,24\prod _{m=1}^{\infty }
(1-q^{m})^{-23}\cdot \sum _{\beta } q^{|\beta |} \frac{\Vsf _{(1)
(1)\beta } (p)}{\Vsf _{\emptyset ,\emptyset ,\beta } (p)}\\
=&\quad 24 \prod _{m=1}^{\infty } (1-q^{m})^{-24}\left\{\frac{1}{12}
-1 +\prod _{m=1}^{\infty } (1-q^{m})\sum _{\beta }q^{|\beta |}
\frac{\Vsf _{(1) (1)\beta } (p)}{\Vsf _{\emptyset ,\emptyset ,\beta }
(p)} \right\}.
\end{align*}

\begin{proposition}
The following identity holds:
\[
\prod _{m=1}^{\infty } (1-q^{m})\sum _{\beta }q^{|\beta |} \frac{\Vsf
_{(1) (1)\beta } (p)}{\Vsf _{\emptyset \emptyset \beta } (p)} =
1+\frac{p}{(1-p)^{2}} + \sum _{d=1}^{\infty }\sum _{k|d} k
(p^{k}+p^{-k}) q^{d}.
\]
\end{proposition}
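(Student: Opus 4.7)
The plan is to proceed in direct analogy with the proof of Lemma~\ref{lem: formula for sum F (a)q^a}: first use the Okounkov-Reshetikhin-Vafa formula to write the vertex ratio as a Fock-space insertion, then evaluate the resulting sum by Bloch-Okounkov. The only substantive new input is that we are dealing with a two-point function rather than a one-point function, and the output will be a $\wp$-like expression rather than a simple product.

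Applying \cite[eqn~3.20]{Okounkov-Reshetikhin-Vafa} in numerator and denominator, the framing factor $p^{\kappa_{(1)}/2}$ cancels since $\kappa_{(1)} = 0$, and the skew-Schur sum $\sum_\eta s_{(1)/\eta}(p^{\beta+\rho})\,s_{(1)/\eta}(p^{\beta^t+\rho})$ collapses to only the two contributions $\eta = \emptyset$ (giving $p_1\cdot p_1$) and $\eta = (1)$ (giving $1\cdot 1$). This yields the clean identity
\[
\frac{\Vsf_{(1)(1)\beta}(p)}{\Vsf_{\emptyset\emptyset\beta}(p)} \;=\; 1 \;+\; \phi(\beta)\,\phi(\beta^t),
\]
where $\phi(\beta) := p_1(p^{\beta+\rho}) = \sum_{i\ge 1} p^{\beta_i - i + 1/2}$ is the regularized particle character of the Maya diagram of $\beta$. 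Splitting the sum accordingly, the constant piece gives $\sum_\beta q^{|\beta|} = \prod_m(1-q^m)^{-1}$ by Euler, and after multiplying by $\prod_m(1-q^m)$ it produces exactly the leading $1$ on the right-hand side of the proposition.

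What remains is the two-point trace
\[
\prod_{m=1}^{\infty}(1-q^m)\,\sum_\beta q^{|\beta|}\,\phi(\beta)\,\phi(\beta^t).
\]
Both $\phi(\beta)$ and $\phi(\beta^t)$ act diagonally on the basis $|\beta\rangle$ of the infinite-wedge representation, and particle/hole duality of Maya diagrams identifies $\phi(\beta^t)$ with the $p\mapsto p^{-1}$ reflection of $\phi(\beta)$ (up to a regularization constant). The Bloch-Okounkov theorem \cite[Thm~6.5]{Bloch-Okounkov}, applied just as in Lemma~\ref{lem: formula for sum F (a)q^a} but now with two field insertions rather than one, evaluates this two-point trace to the Weierstrass combination
\[
\wp(p,q) - \tfrac{1}{12}E_2(q) \;=\; \sum_{n\in\ZZ}\frac{p\,q^n}{(1-p\,q^n)^2}.
\]
Expanding $\frac{pq^n}{(1-pq^n)^2}$ for $n=0$, $n>0$ and $n<0$ as geometric series (the last via the substitution $\frac{pq^{-m}}{(1-pq^{-m})^2} = \frac{p^{-1}q^m}{(1-p^{-1}q^m)^2}$) yields exactly $\frac{p}{(1-p)^2} + \sum_{d\ge 1}\sum_{k|d}k(p^k + p^{-k})q^d$, matching the right-hand side.

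The main obstacle is the bookkeeping around regularization: the bare sums defining $\phi(\beta)$ diverge, so one must either $\zeta$-regularize or pass to the well-defined differences $\phi(\beta) - \phi(\emptyset)$, and verify that the particle/hole involution truly implements the $p\leftrightarrow p^{-1}$ symmetry visible on the right-hand side. Once these conventions are pinned down, the required two-point Fock-space computation is a routine extension of the one-point calculation already invoked from \cite{Bryan-Kool-Young}, and the proof concludes by comparing $q^d$-coefficients term by term.
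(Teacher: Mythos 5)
Your outline follows the same route as the paper's own sketch (and the companion paper \cite{Bryan-Kool-Young} it defers to): apply the Okounkov-Reshetikhin-Vafa formula so that the $(1),(1)$ legs collapse to a two-term $\eta$-sum, recognize the resulting $\beta$-sum as a Bloch-Okounkov two-point trace at arguments tending to $p$ and $p^{-1}$, and expand $\sum_{n\in\ZZ}pq^{n}(1-pq^{n})^{-2}$ to match the right-hand side; that last expansion step in your write-up is correct.

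The gap is that the step you call bookkeeping is where the actual content lies, and your two intermediate claims are not literally true as stated. First, the ``clean identity'' $\Vsf_{(1)(1)\beta}/\Vsf_{\emptyset\emptyset\beta}=1+\phi(\beta)\phi(\beta^{t})$ fails $\beta$-by-$\beta$: for $\beta=(1)$ one counts directly $\Vsf_{(1)(1)(1)}=1+3p+9p^{2}+\cdots$ and $\Vsf_{\emptyset\emptyset(1)}=1+2p+5p^{2}+\cdots$ in the embedded-points normalization, so the ratio is $1+p+2p^{2}+\cdots$, while $1+\phi((1))\phi((1)^{t})=p^{-1}+1+2p+2p^{2}+\cdots$; the two differ by a $\beta$-dependent monomial $p^{c(\beta)}$ coming from the mismatch between the Hilbert-scheme normalization of $\Vsf$ and the ORV renormalized-volume and framing conventions (note that with the naive normalization, in which every $\Vsf$ starts at $1$, the left-hand side could never produce the $p^{-k}$ terms that the proposition's right-hand side contains, so fixing this convention is unavoidable and the correction is not a global constant). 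Second, your two-point trace is precisely Bloch-Okounkov's $F(t_{1},t_{2})$, which has a pole along $t_{1}t_{2}=1$, and you propose to evaluate it directly at $(t_{1},t_{2})=(p,p^{-1})$, i.e.\ on the polar divisor; the separate pieces of your splitting --- the ``$1$'' from $\eta=(1)$, the regularization constants in the particle-hole identity $\phi(\beta^{t})\leftrightarrow -\phi(\beta)|_{p\mapsto p^{-1}}$, and the trace itself --- are individually divergent there. This is exactly why the paper states the answer as the limit of $1-F(t_{1},t_{2})$ as $t_{1}\to p$, $t_{2}\to p^{-1}$ (evaluated by \cite[Thm~6.1]{Bloch-Okounkov}): the monomial corrections and the pole cancel only in the combined limit, and establishing that cancellation is the substance of the proof in \cite{Bryan-Kool-Young}, not a routine convention check appended at the end.
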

\emph{Proof sketch:} Using the Okounkov-Reshetikhin-Vafa formula for
the topological vertex \cite[Eqn~3.20]{Okounkov-Reshetikhin-Vafa}, and
some standard combinatorics, one can rewrite the left hand side of the
above equation so that it is given in terms of Bloch-Okounkov's
2-point correlation function \cite[Eqn~5.2]{Bloch-Okounkov}. Namely,
one can show that it is given by $1-F (t_{1},t_{2})$ in the limit
where $t_{1}$ and $t_{2}$ approach $p$ and $p^{-1}$ respectively. The
limit can be evaluated explicitly using \cite[Thm~6.1]{Bloch-Okounkov}
and this leads to the right hand side of the formula. Details can be
found in \cite{Bryan-Kool-Young}.

Plugging in the proposition's formula into the previously obtained
equation, we see that the non-diagonal contribution to $\DThat _{1}
(X)$ is given as
\[
e\left(\Hilb ^{1,\bullet ,\bullet }_{\Vert ,\fix } (X) \right) =
24\prod _{m=1}^{\infty } (1-q^{m})^{-24}\left\{\frac{1}{12}
+\frac{p}{(1-p)^{2}} +\sum _{d=1}^{\infty }\sum _{k|d}k
(p^{k}+p^{-k})q^{d} \right\}
\]

\subsection{Diagonal contributions.}

To finish our computation of $\DThat _{1} (X)$, it remains to compute
$e\left(\Hilb _{\diag ,\fix }^{1,\bullet ,\bullet } (X) \right)$.

Let $C\subset X$ be a diagonal curve. The projections onto the factors
of $X=S\times E$ induce maps
\begin{align*}
p_{S}:&C\to F_{y}\\
p_{E}:&C\to E
\end{align*}
where $F_{y} $ is a fiber curve, and the maps have degree 1 and some $d>0$ respectively. $F_{y}$ cannot be a nodal fiber since then $C$ would have geometric genus 0 and consequently it would not admit a non-constant map to $E$. The above maps induce a map
\[
f:F_{y}\to E
\]
which must be unramified by the Riemann-Hurwitz formula. Thus the
diagonal curve $C$ is contained in the surface $F_{y}\times E$ and is
given by the graph of the map $f$. Recall that we fixed a slice for
the $E$ action on $\Hilb ^{1,d,n}_{\diag } (X)$ by requiring that the
diagonal curve meets $S_{x_{0}}$ at the section; this is equivalent to
requiring that $f (s)=x_{0}$ where $s\in F_{y} $ is the section point
on $F_{y}$. Up to automorphisms, such a map $f$ must be a group
homomorphism of the corresponding elliptic curves. Assuming that $E$
is generic, so that the only non-trivial automorphism is given by
$x\mapsto -x$, we see that every diagonal curve (with the fixed
condition) is of the form
\[
\{(z,f (z))\in F_{y}\times E \}\text{  or  } \{(z,-f (z))\in F_{y}\times E \}
\]
where $f:F_{y}\to E $ is a group homomorphism.

The number of group homomorphisms of degree $d$ to a fixed elliptic
curve $E$ is given by $\sum _{k|d}k$. This classical fact can be seen
by counting index $d$ sublattices of $\ZZ \oplus \ZZ $. For each such
cover, $F\to E$, the domain elliptic curve will occur exactly 24 times
in the fibration $S\to \PP ^{1}$. So we find that the total number of
diagonal curves having degree $d$ in the $E$ direction is
\[
2\cdot 24\sum _{k|d}k.
\]
Each such diagonal curve can be accompanied by horizontal curves (with
thickenings) as well as embedded points. The contribution of these
components of the Hilbert scheme is computed in exactly the same way
as the contribution of the curves with a smooth vertical component
$F$. Recall that $e\left(\Hilb ^{1,\bullet, \bullet }_{F} (X)
\right)=\prod _{m=1}^{\infty } (1-q^{m})^{-24}$. Taking into account
the degree of the diagonal curves, we thus find
\[
e\left(\Hilb ^{1,\bullet ,\bullet }_{\diag ,\fix } (X) \right) =\prod
_{m=1}^{\infty } (1-q^{m})^{-24} \left(2\cdot 24\cdot \sum
_{d=1}^{\infty }\sum _{k|d}kq^{k} \right).
\]

Finally, adding the vertical and diagonal contributions together we
arrive at
\[
\DThat _{1} (X) = 24q^{-1}\prod _{m=1}^{\infty }
(1-q^{m})^{-24}\left\{\frac{1}{12} +\frac{p}{(1-p)^{2}} +\sum
_{d=1}^{\infty }\sum _{k|d}k (p^{k}+p^{-k}+2)q^{d} \right\}.
\]

Note that this formula is off from the desired formula for $\DT _{h=1}
(X)$ by an overall minus sign and a minus sign on the 2. In fact we
will see in section~\ref{sec: putting in the Behrend function} that
due to the Behrend function, the contribution of the diagonal
components carry the opposite sign of the contribution of the vertical
components. Denoting the contribution to $\DThat_{1} (X)$ coming from
$\Hilb^{1,\bullet ,\bullet }_{\Vert ,\fix } (X) $ and from
$\Hilb^{1,\bullet ,\bullet }_{\diag ,\fix } (X) $ by $\DThat _{1,\Vert
} (X)$ and $\DThat _{1,\diag } (X)$ respectively, we find that we need
to show
\[
\DT _{1} (X) = -\DThat _{1,\Vert } (X) + \DThat _{1,\diag } (X).
\]

\section{Putting in the Behrend function}\label{sec: putting in the
Behrend function}

The goal of this section is to prove, assuming Conjecture~\ref{conj:
Behrend fnc conj}, the relations
\begin{align*}
\DT_{0}(X)&= -\DThat_{0}(X)\\
\DT_{1}(X)&=-\DThat_{1,\Vert}(X) + \DThat_{1,\diag}(X)
\end{align*}
which is all that is needed to complete the proof of Theorem~\ref{thm:
main theorem: DTh for h=0 and h=1}.

\subsection{Overview}
Our general strategy for computing $\DThat (X)$,
the unweighted Euler characteristics of the Hilbert schemes, utilized
the following general scheme.
\begin{enumerate}
\item Using the geometric support of curves (and/or points) of the
subschemes, we stratified $\Hilb (X)$ such that the strata could be
written as products of simpler Hilbert schemes.
\item We utilized actions of $\CC ^{*}$ or $E$ which could be defined
on individual factors in the stratification to discard strata not
fixed by the action and restrict to fixed points.
\item We found that some strata were parameterized by symmetric
products, and we pushed forward the Euler characteristic computation
to the symmetric products where we used Lemma~\ref{lem: euler chars of
sym}.
\item After possibly iterating steps (1)--(3), we reduced the
computation to counting discrete subscheme configurations, namely
those which are given formally locally by monomial ideas. These we
counted with the topological vertex.
\end{enumerate}

In order to incorporate the weighting by $\nu$, the Behrend function,
into the Euler characteristics in the above strategy, we need to
modify steps (2) and (4).

For (2) to apply to $\nu$-weighted Euler characteristics, we need to
know that $\nu$, restricted to the relevant stratum $S\subset \Hilb
(X)$, is invariant under the action of the group. We can do this by
showing that the group (or possibly a modification of the group) acts
on the formal neighborhood of $S$ inside of $\Hilb (X)$. This works
since the value of the Behrend function at a point only depends on the
formal neighborhood of the point.

To modify step (4), the final step, we will need to know the value of
the Behrend function at subschemes given formally locally by monomial
ideals. In particular, we want to show the value is given by $\pm 1$,
where the sign alternates as $n$ increases. This will account for the
relatively simple relationship between $\DT (X)$ and $\DThat (X)$.

We are only partially able to succeed with the above modification. In
the first two iterations of steps (1), (2), and (3) of the strategy,
we succeed in extending the actions of the group (or related group) to
the formal neighborhood of the strata. However, in the final iteration
of (1)-(3), the $(\CC^{*})^{3}$ action we used on the strata does not
obviously extend to their formal neighborhoods (quite possibly such an
extension does not exist). To remedy this situation, we must assume a
conjecture first formulated in \cite[Conj~18]{Bryan-Kool}.

\subsection{Elaboration.}  Let us expand on the above discussion to
highlight the issues.

In the first two iterations of steps (1), (2), and (3), we stratified
by the curve support of the subschemes. These strata decompose into
the product given by the equations \eqref{eqn:
Hilb/E=Hilb(X-CxE)*Hilb(Xhat)}, \eqref{eqn:
Hilb/E=Hilb(X-FxE)*Hilb(Xhat)}, and \eqref{eqn:
Hilb/E=Hilb(X-NxE)*Hilb(Xhat)} where the second factors correspond to
connected components of the curve with pure vertical support. This
factor admits an extra $E$ action, and we wish to show that this
action extends to the formal neighborhoods of the strata.

The formal neighborhood of a closed point in $\Hilb (X)$ parameterizes
infinitesimal deformations of the subscheme corresponding to the
closed point. Connected components of a subscheme deform independently
from each other and consequently the products given in equations
\eqref{eqn: Hilb/E=Hilb(X-CxE)*Hilb(Xhat)}, \eqref{eqn:
Hilb/E=Hilb(X-FxE)*Hilb(Xhat)},  and \eqref{eqn:
Hilb/E=Hilb(X-NxE)*Hilb(Xhat)} extend to their formal neighborhoods,
as do the $E$ actions.

The first factor in the products \eqref{eqn:
Hilb/E=Hilb(X-CxE)*Hilb(Xhat)}, \eqref{eqn:
Hilb/E=Hilb(X-FxE)*Hilb(Xhat)}, and \eqref{eqn:
Hilb/E=Hilb(X-NxE)*Hilb(Xhat)} correspond to subschemes supported in
$\Xhat_{C\times E}$ where $C$ is the curve in the slice $S\times
x_{0}$, which is either $C_{0}$ (for $h=0$), or $N$ or $F$ (for
$h=1$). This stratum was further stratified by the location of the
vertical components of such curves. Each of these strata admits a
$\CC^{*}\times \CC^{*}$ induced by the action 
\[
\Xhat_{\{y \}\times
E}\cong \Spec \CC [[x,y]]\times E,
\]
the formal neighborhood of a vertical component. This action does
\emph{not} obviously extend to the formal neighborhood of the strata
in the Hilbert scheme. The basic issue is the following.

A subscheme in $X$ whose reduced support is 
\[
C\times x_{0} \cup y\times E
\]
and whose multiplicies along $C\times x_{0}$ and $y\times E$ is 1 and
some $a\geq 1$ on $E$ respectively is uniquely determined by its
restrictions to 
\[
\Xhat_{y\times E} \quad \text{and}\quad  \Xhat_{C\times X_{0}}-\{y\times x_{0}  \}.
\]
The action of $\CC^{*}\times \CC^{*}$ on $\Xhat_{y\times E}$ thus
induces an action on this strata. However, it is not clear if the
action on this stratum extends to an action on its formal neighborhood
due to (for example) the possibility of obstructed infinitesimal
deformations of the subscheme smoothing the node.

We can circumvent this problem by using somewhat different group
actions. In the case of $h=0$, the $\CC^{*}\times \CC^{*}$ action on
$\Xhat_{y\times E}$ \textit{does} extend to an action on
$\Xhat_{C_{0}\times E}$. The reason is that $C_{0}$ is a smooth $-2$
curve in a surface and consequently its formal neighborhood in the
surface is isomorphic to the formal neighborhood of the zero section
in the total space of $T^{*}\PP^{1}$. This formal scheme carries an
$\CC^{*}\times \CC^{*}$ action which can be choosen to be compatible
with the one on $\Xhat_{y\times E}$. This then induces an action on
$\Hilb (\Xhat_{C_{0}\times E})/E$ which naturally extends to its formal
neighborhood in $\Hilb (X)/E$. 

In the case of $h=1$, we can also construct global group actions on
$\Xhat_{C\times E}$, but different from the $\CC^{*}\times \CC^{*}$
action previously used. For $C=F$, a smooth elliptic curve, we
previously noted that the action of $F$ on $F\times E$ given by
translation on the first factor, does not extend to $\Xhat_{F\times
E}$. The reason is that the linear system of $F$ in $S$ induces a
non-trivial elliptic fibration $S\to \PP^{1}$ and thus a non-trivial
fibration $\hat{S}_{F}\to \Spec \CC [[t]]$. However, after choosing a
section of the above map, $\hat{S}_{F}$ is a group scheme over $\Spec
\CC [[t]]$ and the Mordell-Weil group of sections acts (freely) on
$\hat{S}_{F}$, and thus on $\Xhat_{F\times E} = \hat{S}_{F}\times
E$. This induces an action of the Mordell-Weil group on $\Hilb
(\Xhat_{F\times E})/E$ which extends to its formal neighborhood and is
free whenever the degree in the vertical direction is non-zero. The
Mordell-Weil group is an an extension of the group $F$ by an additive
group and so its orbits have Euler characteristic zero. Consequently
the conclusion expressed by equation~\eqref{eqn:
e(Hilb(Xhat_{FxE}))=1} for the usual Euler characteristics also
applies to the $\nu$-weighted Euler characteristics.

Similarly, the Mordell-Weil group of sections of $\hat{S}_{N}\to \Spec
\CC [[t]]$ acts on $\Hilb (\Xhat_{N\times E})/E$ and its formal
neighborhood. This group is an extension of $\CC^{*}$ (the group of
the nodal fiber) by an additive group and hence necessarily
splits. Thus we get an action of $\CC^{*}$ on $\Hilb (\Xhat_{N\times
E})/E$ which is compatible with the Behrend function. The fixed points
are subschemes whose vertical part has support in $\Xhat_{\{z \}\times
E}$ where $z\in N$ is the node. Moreover, the $\CC ^{*}$ action here
is given by $\lambda (x,y) = (\lambda x, \lambda^{-1}y)$ for a
suitable choice of isomorphism $\Xhat_{\{z \}\times E} \cong E\times
\Spec \CC [[x,y]]$. The fixed subschemes under this action correspond
to subschemes whose maximal Cohen-Macaulay subscheme is formally
locally given by monomial ideals.

Via the above, we have shown that the $\nu$-weighted Euler
characteristics of the Hilbert schemes are equal to the $\nu$-weighted
Euler characteristics of the locus in the Hilbert scheme
parameterizing subschemes whose maximal Cohen-Macaulay subscheme is
given formally locally by monomial ideals.

In the final iteration of steps (1)-(3), we were able to further
reduce to subschemes whose embedded points are also local monomial. To
achieve this we further stratified our strata by the location of the
embedded points. We then used the action of $\left(\CC^{*}
\right)^{3}$ on the formal neighborhoods of the embedded points to
construct a $\left(\CC^{*} \right)^{3}$ action on these
substrata. Unfortunately, we are unable to show that this action on
the substrata extends to a formal neighborhood of the
substrata. Consequently, we cannot show that the Behrend function is
compatible with the action.

To circumvent this problem, we will assume the validity of the
following conjecture, restated from \cite[Conj~18]{Bryan-Kool}.  Let
$Y$ be any quasi-projective Calabi-Yau threefold.  Let $C\subset Y$ be
a (not necessarily reduced) Cohen-Macaulay curve with proper
support. Assume that the singularities of $C_{\red}$ are locally
toric\footnote{This means that formally locally $C_{\red}$ is either
smooth, nodal, or the union of the three coordinate axes. That is at
$p\in C_{\red}\subset Y$ the ideal $\widehat{I}_{C_{\red}}\subset
\widehat{\mathcal{O}}_{Y,p}$ is given by $(x_{1},x_{2})$,
$(x_{1},x_{2}x_{3})$, or $(x_{1}x_{2},x_{2}x_{3},x_{1}x_{3})$ for some
isomorphism $\widehat{\mathcal{O}}_{Y,p}\cong \CC
[[x_{1},x_{2},x_{3}]]$. }. Let
\[
\Hilb^{n}(Y,C) = \{Z\subset Y \text{ such that $C\subset Z$ and
$I_{C}/I_{Z}$ has finite length $n$} \}.
\]

Note that $\Hilb^{n}(Y,C)\subset \Hilb (Y)$ and let $\nu$ denote the
Behrend function on $\Hilb (Y)$.

\begin{conjecture}[\cite{Bryan-Kool}]\label{conj: Behrend fnc conj}
\[
\int_{\Hilb^{n}(Y,C)} \nu \, de = (-1)^{n} \nu ([C]) \int_{\Hilb^{n}(Y,C)} \, de,
\]
where $\nu ([C])$ is the value of the Behrend function at the point $[C]\in \Hilb (Y)$.
\end{conjecture}

This conjecture allows us to promote the remaining part of our Euler
characteristic computation to $\nu$-weighted Euler characteristic,
once we compute the value of the Behrend function at locally monomial
Cohen-Macaulay subschemes. Suppose we could show that value was always
$(-1)^{n}$ where $n$ is the holomorphic Euler characteristic of the
subscheme, then, because of the $(-p)$ built directly into the
definition of $\DT_{g}(X)$, we could conclude that $\DT_{g}(X) =
\DThat_{g}(X)$ for $h=0$ and $h=1$. We will show this is nearly true:
in fact the Behrend function has value $-(-1)^{n}$ for all curves in
the $h=0$ case and for those without diagonal components in the $h=1$
case. Curves with diagonal components have the sign $(-1)^{n}$.

\subsection{The Behrend function at Cohen-Macaulay subschemes.}

The only Cohen-Macaulay subschemes which contribute to the invariants
in our counting scheme are all of the following form:
\begin{enumerate}
\item $Z\times E$,
\item $C_{0}\times x_{0}\cup Z\times E \quad \quad (h=0),$
\item $N\times x_{0}\cup Z\times E \quad \quad (h=1),$ and
\item $C$, a diagonal curve (necessarily reduced)
\end{enumerate}
where $Z\subset S$ is a zero dimensional subscheme of length $d$. For
each of these cases, we show these subschemes lie in a smooth, open
locus of the corresponding component of the Hilbert scheme and hence
the value of Behrend function is given by $(-1)^{D}$ where $D$ is the
dimension of that open set.

This is entirely parallel to the analysis in sections 8 and 9 of
\cite{Bryan-Kool}. Namely, we construct an explicit $D$-dimensional
family of such subschemes and we then show that the family is smooth
by showing the Zariski tangent space also has dimension $D$.

\begin{proposition}\label{prop: dimen of families of CM schemes}
The following families of subschemes have the given dimensions and are
open sets of Hilbert scheme which are smooth at subschemes given
locally by monomial ideals.
\begin{enumerate}
\item The locus of schemes of the form $Z\times E$ has dimension $2d$,
where $Z\subset S$ is a length $d$ zero-dimensional subscheme.
\item  The locus of schemes of the form $C_{0}\times x_{0}\cup Z\times
E$ has dimension $2d-k$, where $Z\subset S$ is a length
$d$ zero-dimensional subscheme such that $\operatorname{length}(C_{0}\cap Z)=k$.
\item The locus of schemes of the form $C\times x_{0}\cup Z\times
E$ has dimension $2d-k+1$, where $Z\subset S$ is a length
$d$ zero-dimensional subscheme such that $\operatorname{length}(C\cap
Z)=k$ and $C\in |F|$ is any curve in the class $F$ (including $N$).
\item The locus of diagonal curves has dimension 0.
\end{enumerate}
\end{proposition}
\begin{proof}
The family (1) is paramterized by $\Hilb^{d}(S)$ which has dimension
$2d$. The family (2) is parameterized by the locus of points $[Z]\in
\Hilb^{d}(S)$ given by the codimension condition that
$\operatorname{length}(Z\cap C_{0})=k$. This is smooth of dimension
$2d-k$ by \cite[Thm~20]{Bryan-Kool}. The family (3) is parameterized
by the locus of points $[Z]\in \Hilb^{d}(S)$ such that
$\operatorname{length}(Z\cap C)=k$ for some $C\in |F|$. This family
maps to $|F|\cong \PP^{1}$ with fibers of dimension $2d-k$ (again by
\cite[Thm~20]{Bryan-Kool}).

To complete the proof of the proposition, we want to show these sets are
open and smooth at the monomial ideals. It suffices to show that the
dimension of the Zariski tangent space is of the given dimension. The
tangent space of the Hilbert scheme at a given subscheme $Z$ is given
by the group $ \Hom
(I_{Z},\O_{Z}) \cong \operatorname{Ext}^{1}(I_{Z},I_{Z})$. These Ext
groups can be computed at monomial ideals using the technique of
\cite[\S~9]{Bryan-Kool}. Indeed, the proof of Thm~21 in
\cite{Bryan-Kool} applies with minor modifications to the cases
(1)-(3). Finally it is easy to see that a diagonal curve $C\subset X$
is scheme-theoretically isolated up to translation by $E$. For
example, since $C$ is smooth and reduced, the Zariski tangent space is
given by $H^{0}(C,N_{C/X})$. Here $C$ is given as the graph of some
homomorphism $f:F_{y}\to E$ and thus $N_{C/X}$ is given as the non-trivial
extension of $f^{*}N_{F/S}\cong \O_{C} $ by $N_{C/F_{y}\times E}\cong
\O_{C}$. Therefore $H^{0}(C,N_{C/X})\cong \CC$ which corresponds to
the translations by $E$.
\end{proof}

Using the normalization exact sequence, one easily computes $n$, the
holomorphic Euler characteristic of the subschemes given by the four
families in Proposition~\ref{prop: dimen of families of CM
schemes}. Namely, 
\[
n=\begin{cases}
0&\text{for family (1)}\\
1-k&\text{for family (2)}\\
-k&\text{for family (3)}\\
0&\text{for family (4)}\\
\end{cases}
\]
Since the value of the Behrend function at a smooth point of dimension
$D$ is $(-1)^{D}$, the above formulas, along with
Proposition~\ref{prop: dimen of families of CM schemes}  gives us
\[
\nu  =
\begin{cases}
(-1)^{2d}=1=(-1)^{n}&\text{for family (1)}\\
(-1)^{2d-k}=(-1)^{-k}=-(-1)^{n}&\text{for family (2)}\\
(-1)^{2d-k+1}=(-1)^{-k+1}=-(-1)^{n}&\text{for family (3)}\\
(-1)^{0}=1=(-1)^{n}&\text{for family (4)}\\
\end{cases}
\]

In the $h=0$ case, the locally monomial Cohen-Macaulay subschemes
which contribute to the $\nu$-weighted Euler characteristics are
disjoint unions of curves in family (1) and a single curve in family
(2). Thus, they always come with the sign $-(-1)^{n}$ and we can
conclude
\[
\DT_{0}(X) = -\DThat_{0}(X).
\]
In the $h=1$ case, the locally monomial Cohen-Macaulay subschemes
without a diagonal component which contribute to the $\nu$-weighted
Euler characteristics are disjoint unions of curves in family (1) with
a single curve in family (3). Thus the contribution of the
non-diagonal curves to $\DT_{h=1}(X)$ is given by
$-\DThat_{1,\Vert}(X)$.

Finally, locally monomial Cohen-Macaulay
subschemes with a diagonal curve which contribute to the
$\nu$-weighted Euler characteristic are disjoint unions of curves in
family (1) with a single curve in family (4). Thus the contribution of
the diagonal curves to $\DT_{h=1}(X)$ is given by $\DT_{1,\diag}(X)$.

Putting it all together we see that 
\begin{align*}
\DT_{0}(X)&= -\DThat_{0}(X)\\
\DT_{1}(X)&=-\DThat_{1,\Vert}(X) + \DThat_{1,\diag}(X)
\end{align*}
as desired which completes the proof of Theorem~\ref{thm: main theorem: DTh for h=0 and h=1}.

\section{Prospects for $h>1$}
   
Our strategy can be applied to the case of computing $\DT _{h} (X)$
for $h>1$ although some new issues and complexities aries. Our method
is predicated on two main things:
\begin{enumerate}
\item Having a detailed understanding of the possible curve support of
subschemes in the class $\beta _{h}+dE$.
\item Having the singularities of the curves be formally locally toric
so that vertex methods can be applied.
\end{enumerate}

Addressing issue (1) grows increasingly difficult as $h$ gets
larger. For relatively small values of $h$, one has a pretty explicit
understanding of the curves in the linear system of $\beta _{h}$. To
address (1) fully also requires understanding ``diagonal''
curves. This amounts to solving the following interesting enumerative
question about $K3$ surfaces:
\begin{question}
Given a $K3$ surface with a irreducible curve class $\beta $ of square
$2h-2$, how many curves of geometric genus $g$ are in the class $\beta
$ which admit a degree $d$ map to a (fixed) elliptic curve $E$?
\end{question}

Note that genus $g$ curves on a $K3$ surface always move in an $g$
dimensional family, and the dimension of genus $g$ curves admitting a
degree $d$ map to an elliptic curve $E$ is $2g-3$ (independent of $d$)
and thus is codimension $g$ in $\overline{M} _{g}$. Therefore this is
a dimension zero problem.

Addressing issue (2) requires some new ideas. Starting at $h=2$, one
must confront curves with singularities worse than nodes. For small
$h$, one should be able to finesse around this issue. For example, for
$h=2$, one will need the contribution of a curve in $K3$ with a cusp,
with a $d$-fold thickening of $E$ attached at the cusp. This is not
locally toric and so its contribution cannot be computed using the
vertex methods that we used for locally toric subschemes. However,
this contribution can be fully determined from the $h=1$ results as
follows. One redoes the $h=1$ computation using an elliptically
fibered $K3$ which has a cuspidal singular fiber. This will enable one
to reverse engineer the cusp contribution which one can then apply to
compute the $h=2$ case fully. However, it isn't clear how far one can
get with this inductive sort of strategy.

A more satisfying way to handle a contribution from arbitrary surface
singularity would be to relate this contribution to the knot
invariants of the link of the singularities. This would be in keeping
with the work of Shende and Oblombkov \cite{Oblomkov-Shende-Duke}
although it doesn't appear that their results direct apply.

\bibliography{/Users/jbryan/jbryan/resources/mainbiblio}

\begin{thebibliography}{10}

\bibitem{Behrend-micro}
Kai Behrend.
\newblock Donaldson-{T}homas type invariants via microlocal geometry.
\newblock {\em Ann. of Math. (2)}, 170(3):1307--1338, 2009.
\newblock arXiv:math/0507523.

\bibitem{Bloch-Okounkov}
Spencer Bloch and Andrei Okounkov.
\newblock The character of the infinite wedge representation.
\newblock {\em Adv. Math.}, 149(1):1--60, 2000.
\newblock arXiv:alg-geom/9712009.

\bibitem{Bridgeland-PTDT}
Tom Bridgeland.
\newblock Hall algebras and curve-counting invariants.
\newblock {\em J. Amer. Math. Soc.}, 24(4):969--998, 2011.
\newblock arXiv:1002.4374.

\bibitem{Bridgeland-IntroToHallAlg}
Tom Bridgeland.
\newblock An introduction to motivic {H}all algebras.
\newblock {\em Adv. Math.}, 229(1):102--138, 2012.
\newblock arXiv:1002.4372.

\bibitem{Bryan-Kool}
Jim Bryan and Martijn Kool.
\newblock {Donaldson-Thomas invariants of local elliptic surfaces via the
  topological vertex}.
\newblock arXiv:math/1608.07369.

\bibitem{Bryan-Kool-Young}
Jim Bryan, Martijn Kool, and Benjamin Young.
\newblock Trace identities for the topological vertex.
\newblock {\em Selecta Mathematica}, Jan 2017.
\newblock arXiv:math/1603.05271.

\bibitem{Joyce-Song}
Dominic Joyce and Yinan Song.
\newblock A theory of generalized {D}onaldson-{T}homas invariants.
\newblock {\em Mem. Amer. Math. Soc.}, 217(1020):iv+199, 2012.
\newblock arXiv:math/0810.5645.

\bibitem{MNOP1}
D.~Maulik, N.~Nekrasov, A.~Okounkov, and R.~Pandharipande.
\newblock Gromov-{W}itten theory and {D}onaldson-{T}homas theory. {I}.
\newblock {\em Compos. Math.}, 142(5):1263--1285, 2006.
\newblock arXiv:math.AG/0312059.

\bibitem{Oberdieck-Pandharipande}
G.~Oberdieck and R.~Pandharipande.
\newblock Curve counting on {$K3\times E$}, the {I}gusa cusp form
  {$\chi_{10}$}, and descendent integration.
\newblock In {\em K3 surfaces and their moduli}, volume 315 of {\em Progr.
  Math.}, pages 245--278. Birkh\"auser/Springer, [Cham], 2016.
\newblock arXiv:math/1411.1514.

\bibitem{Oberdieck-reduced-stable-pair}
Georg Oberdieck.
\newblock {On reduced stable pair invariants}.
\newblock arXiv:math/1605.04631.

\bibitem{Oberdieck-Pixton}
Georg Oberdieck and Aaron Pixton.
\newblock {Holomorphic anomaly equations and the Igusa cusp form conjecture}.
\newblock arXiv:math/1706.10100.

\bibitem{Oblomkov-Shende-Duke}
Alexei Oblomkov and Vivek Shende.
\newblock The {H}ilbert scheme of a plane curve singularity and the {HOMFLY}
  polynomial of its link.
\newblock {\em Duke Math. J.}, 161(7):1277--1303, 2012.

\bibitem{Okounkov-Reshetikhin-Vafa}
Andrei Okounkov, Nikolai Reshetikhin, and Cumrun Vafa.
\newblock Quantum {C}alabi-{Y}au and classical crystals.
\newblock In {\em The unity of mathematics}, volume 244 of {\em Progr. Math.},
  pages 597--618. Birkh\"auser Boston, Boston, MA, 2006.
\newblock arXiv:hep-th/0309208.

\bibitem{PTVV}
Tony Pantev, Bertrand To{\"e}n, Michel Vaqui{\'e}, and Gabriele Vezzosi.
\newblock Shifted symplectic structures.
\newblock {\em Publ. Math. Inst. Hautes \'Etudes Sci.}, 117:271--328, 2013.
\newblock arXiv:math/1111.3209.

\end{thebibliography}
\bibliographystyle{plain}

\end{document}